\newcommand{\newsect}[1]{\vskip7mm\section{#1}}           
\newcommand{\Z}{{\mathbb Z}}
\newcommand{\Q}{{\mathbb Q}}
\newcommand{\F}{{\mathbb F}}
\newcommand{\pcom}{{}_{p}^{\wedge}}
\DeclareMathAlphabet\EuR{U}{eur}{m}{n}
\SetMathAlphabet\EuR{bold}{U}{eur}{b}{n}
\newcommand{\Inj}{\operatorname{Inj}\nolimits}
\newcommand{\defeq}{\overset{\text{\textup{def}}}{=}}
\renewcommand{\:}{\colon}
\newcommand{\calb}{\mathcal{B}}
\newcommand{\calf}{\mathcal{F}}
\newcommand{\call}{\mathcal{L}}
\newcommand{\calp}{\mathcal{P}}
\newcommand{\curs}{\EuR}
\newcommand{\nsg}{\vartriangleleft}
\newcommand{\Id}{\operatorname{Id}\nolimits}
\newcommand{\incl}{\operatorname{incl}\nolimits}
\newcommand{\Inn}{\operatorname{Inn}\nolimits}
\newcommand{\isotyp}{_{\textup{typ}}}      
\let\oldcirc=\circ
\renewcommand{\circ}{\mathchoice
    {\mathbin{\scriptstyle\oldcirc}}{\mathbin{\scriptstyle\oldcirc}}
    {\mathbin{\scriptscriptstyle\oldcirc}}
    {\mathbin{\scriptscriptstyle\oldcirc}}}
\newcommand{\hclim}[1]{\setbox1=\hbox{\rm hocolim}
    \setbox2=\hbox to \wd1{\rightarrowfill} \ht2=0pt \dp2=-1pt
    \mathop{\vtop{\baselineskip=5pt\box1\box2}}
    _{#1}}
\newcommand{\higherlim}[2]{\displaystyle\setbox1=\hbox{\rm lim}
	\setbox2=\hbox to \wd1{\leftarrowfill} \ht2=0pt \dp2=-1pt
	\setbox3=\hbox{$\scriptstyle{#1}$}
	\ifdim\wd1<\wd3
	\mathop{\hphantom{^{#2}}\vtop{\baselineskip=5pt\box1\box2}^{#2}}_{#1}
	\else
	\mathop{\vtop{\baselineskip=5pt\box1\box2}}\limits_{#1}\nolimits^{#2}
	\fi}
\renewcommand{\hom}{\operatorname{Hom}\nolimits}
                     \newcommand{\Hom}{\operatorname{Hom}\nolimits}
                     \newcommand{\Rep}{\operatorname{Rep}\nolimits}
\newcommand{\Iso}{\operatorname{Iso}\nolimits}
\newcommand{\Aut}{\operatorname{Aut}\nolimits}
\newcommand{\Out}{\operatorname{Out}\nolimits}
\newcommand{\End}{\operatorname{End}\nolimits}
\newcommand{\Mor}{\operatorname{Mor}\nolimits}
\newcommand{\Ker}{\operatorname{Ker}\nolimits}
\newcommand{\rk}{\operatorname{rk}\nolimits}
\newcommand{\xxto}[1]{\mathrel{\mathop{%
  \setbox0\hbox{$\ {\scriptstyle#1}\ $}%
  \hbox to \wd0{\rightarrowfill}}^{#1}}%
}
\newcommand{\xto}[2][]{%
  \mathrel{\mathop{%
    \setbox0\vbox{
      \hbox{$\scriptstyle\;\;{#1}\;\;$}%
      \hbox{$\scriptstyle\;\;{#2}\;\;$}%
    }%
    \hbox to\wd0{\rightarrowfill}\displaystyle}%
  \limits^{#2}\ifx{#1}{}\else{_{#1}}\fi}%
}
\newcommand{\longleft}[1]{\;{\leftarrow%
\count255=0 \loop \mathrel{\mkern-6mu}%
    \relbar\advance\count255 by1\ifnum\count255<#1\repeat}\;}
\newcommand{\longright}[1]{\;{\count255=0 \loop \relbar\mathrel{\mkern-6mu}%
    \advance\count255 by1\ifnum\count255<#1\repeat\rightarrow}\;}
\newcommand{\Right}[2]{\overset{#2}{\longright#1}}
\newcommand{\RIGHT}[3]{\mathrel{\mathop{\kern0pt\longright#1}
        \limits^{#2}_{#3}}}
\newcommand{\LEFT}[3]{\mathrel{\mathop{\kern0pt\longleft#1}\limits^{#2}_{#3}}
}
\newcommand{\dRIGHT}[3]{\mathrel{%
   \mathop{\vcenter{\baselineskip=0pt\hbox{$\kern0pt\longright#1$}%
   \hbox{$\kern0pt\longright#1$}}}\limits^{#2}_{#3}}}
\newcommand{\LRIGHT}[3]{\mathrel{%
   \mathop{\vcenter{\baselineskip=0pt\hbox{$\kern0pt\longleft#1$}%
   \hbox{$\kern0pt\longright#1$}}}\limits^{#2}_{#3}}}
\newcommand{\RLEFT}[3]{\mathrel{%
   \mathop{\vcenter{\baselineskip=0pt\hbox{$\kern0pt\longright#1$}%
   \hbox{$\kern0pt\longleft#1$}}}\limits^{#2}_{#3}}}
\newcommand{\onto}[1]{\;{\count255=0 \loop \relbar\joinrel
    \advance\count255 by1
    \ifnum\count255<#1 \repeat \twoheadrightarrow}\;}
\newtheoremstyle{slant}{}{}{\slshape}{}{\bfseries}{.}{.5em}{}%
\newtheoremstyle{special}{}{}{\slshape}{}{\bfseries}{.}{.5em}{\thmnote{#3}}
\newtheorem{Thm}{Theorem}[section]
\newtheorem{Prop}[Thm]{Proposition}
\newtheorem{Cor}[Thm]{Corollary}
\newtheorem{Lem}[Thm]{Lemma}
\newtheorem{Th}{Theorem}
\theoremstyle{definition}
\newtheorem{Rmk}[Thm]{Remark}
\newtheorem{Defi}[Thm]{Definition}
\theoremstyle{remark}
\newcommand{\SFL}{(S,\calf,\call)}
\newcommand{\units}[1]{\Gamma_{#1}(p)}
\newcommand{\ord}{\mathrm{ord}}
\newcommand{\calh}{\mathcal{H}}
\newcommand{\calg}{\mathcal{G}}
\newcommand{\mset}{\curs{m}}
\newcommand{\objset}{\curs{P}}
\newcommand{\adams}{\curs{Ad}}
\newcommand{\A}{\curs{a}}
\newcommand{\homf}{\Hom_\calf}
\newcommand{\isof}{\Iso_\calf}
\newcommand{\N}{\mathbb{N}}
\newcommand{\Der}{\mathrm{Der}}
\newcommand{\PDer}{\mathrm{PDer}}
\newcommand{\n}{\curs{n}}
\newcommand{\morf}[2]{\Hom_\calf(#1,#2)}
\newcommand{\morl}[2]{\Mor_\call(#1,#2)}
\def\al{\alpha}
\def\be{\beta}
\def\de{\delta}
\def\ga{\gamma}
\def\Ga{\Gamma}
\def\la{\lambda}
\def\si{\sigma}
\def\vp{\varphi}
\def\F{\mathcal{F}}
\def\L{\mathcal{L}}
\newcommand{\cset}{\curs{c}}
\newcommand{\dset}{\curs{d}}
\newcommand{\iset}{\curs{i}}
\newcommand{\Mset}{\curs{M}} \def\tMset{\widetilde{\Mset}}
\newcommand{\nset}{\curs{n}}
\newcommand{\rset}{\curs{r}}
\def\GL{\operatorname{GL}}
\def\res{\operatorname{res}}
\theoremstyle{definition}
\renewcommand{\labelenumi}{\textup{(\roman{enumi})}}%
\renewenvironment{enumerate}{\begin{list}%
{\labelenumi}
{\usecounter{enumi}%
\setlength{\itemindent}{0pt}%
\settowidth{\labelwidth}{\labelenumi}%
\addtolength{\labelwidth}{\labelsep}%
\setlength{\leftmargin}{\labelsep}%
\addtolength{\leftmargin}{\labelwidth}%
\setlength{\listparindent}{0pt}%
\setlength{\itemsep}{6pt}%
\setlength{\parsep}{0pt}%
\setlength{\topsep}{6pt}%
}}{\end{list}}
\title{Unstable Adams Operations on $p$-local Compact Groups}
\author{Fabien Junod}
\address{Procter and Gamble}
\email{junod.f@pg.com}
\author{Ran Levi}
\address{Institute of Mathematics, University of Aberdeen,
Fraser Noble Building, Aberdeen AB24 3UE, U.K.}
\email{r.levi@abdn.ac.uk}
\author{Assaf Libman}
\address{Institute of Mathematics, University of Aberdeen,
Fraser Noble Building, Aberdeen AB24 3UE, U.K.}
\email{a.libman@abdn.ac.uk}
\subjclass{Primary 55R35. Secondary 55R40, 20D20}
\keywords{Classifying spaces,  $p$ local compact  groups}
\begin{document}

\begin{abstract}
A $p$-local compact group is an algebraic object modelled on the $p$-local homotopy theory of classifying spaces of compact Lie groups and $p$-compact groups. In the study of these objects unstable Adams operations,  are of fundamental importance. In this paper we define unstable Adams operations within the theory of $p$-local compact groups, and show that such operations exist under rather mild conditions. More precisely, we prove that for a given $p$-local compact group $\calg$ and a sufficiently large positive integer $m$, there exists an injective group homomorphism from the group of $p$-adic units which are congruent to 1 modulo $p^m$ to the group of unstable  Adams operations on $\calg$. \end{abstract}

\maketitle

Let $p$ be a prime number. 
In \cite{BLO3} Broto, the second author and Oliver developed the theory of $p$-local compact groups. 
The theory is modelled on the $p$-local homotopy theory of classifying spaces of compact Lie groups and $p$-compact groups, and generalises the earlier concept of $p$-local finite groups \cite{BLO2}. 
It provides a coherent context in which classifying spaces of compact Lie groups and $p$-compact groups \cite{DW} can be studied, and also gives rise to many exotic examples. 
Roughly speaking, a $p$-local compact group is a triple $\calg=\SFL$, where $S$ is a discrete $p$-toral group, $\F$ is a saturated fusion system over $S$, and $\L$ is a centric linking system associated to $\calf$.
More specifically, the group $S$ is  an extension of a finite $p$-group by a group of the form $(\Z/p^\infty)^r$, where $\Z/p^\infty = \cup_k \Z/p^k$. One can think of $S$ as a Sylow $p$-subgroup of $\calg$ in an appropriate sense. 
The saturated fusion system $\calf$ is a category whose set of objects consists of all subgroups of $S$, and whose morphisms model conjugacy relations among subgroups of $S$. The centric linking system  $\call$ is again a category which is an enrichment of $\calf$ with just about enough structure to allow one to associated with $\calf$ a homotopy theoretically meaningful "classifying space"  given by the space $|\call|\pcom$ which we shall denote by $B\calg$. Here $(-)\pcom$ denotes the Bousfield-Kan \cite{BK} $p$-completion functor, and $|-|$ stands for the functor which associates with a small category its nerve.  
Compact Lie groups and $p$-compact groups give rise to $p$-local compact groups, whose classifying spaces coincide up to homotopy with the $p$-completed classifying space of the object from which they originate. 

Unstable Adams operations are certain self equivalences of classifying spaces of compact Lie groups, which were also defined and studied for $p$-compact groups (see for instance \cite{JMO, AGMV}). 
For a compact connected Lie group $G$, one defines an unstable Adams operation of degree $k$ on $G$, for some integer $k$, to be any self map of the classifying space $BG$ which induces multiplication by $k^i$ on $H^{2i}(BG,\Q)$. 
There is an analogous definition for connected $p$-compact groups, where the degree is allowed to be any $p$-adic unit, and rational cohomology is replaced by $H^*_{\Q_p} (-) = H^*(-,\Z\pcom)\otimes \Q$. 

Unstable Adams operations are of fundamental importance in the study of classifying spaces of compact Lie and $p$-compact groups, which motivates the study undertaken in this paper, the aim of which is to prove the existence unstable Adams operations on $p$-local compact groups. 

For a compact Lie group $G$, the rational cohomology $H^*(BG,\Q)$ is given by the invariants of the action of the Weyl group of $G$ on $H^*(BT,\Q)$, where $T\le G$ is a maximal torus \cite{Bo}. Thus it is not hard to see, as we will observe in Section \ref{sec:adams-automorphisms}, that the definition of an unstable Adams operation of degree $k$ on $BG$ is equivalent to the existence of a self map of $BG$, which extends the map induced by the $k$-th power map on $BT$. The analogous description of cohomology holds for connected $p$-compact groups \cite[Thm. 9.7]{DW}, and so here as well one can define an unstable Adams operation as a self map extending the map induced by the appropriate power map on the maximal torus.

A precise algebraic definition of what unstable Adams operations actually mean for $p$-local compact groups will be given in Section \ref{sec:unstable-adams-operations}. Roughly and more geometrically speaking, given a $p$-local compact group $\calg$ and a $p$-adic unit $\zeta$, an unstable Adams operation of $\calg$ of degree $\zeta$ is a self equivalence  of $B\calg$, which restricts up to homotopy to the map induced by the $\zeta$ power map on the classifying space of the maximal torus of $\calg$ (see Definition \ref{dpt-groups}). A comparison of this notion with the algebraic concept will be carried out in Section \ref{sec:unstable-adams-operations} as well.
The collection of all unstable Adams operation on $\calg$ forms a group under composition called the group of unstable Adams operations on $\calg$, which we denote by $\adams(\calg)$.

For a positive integer $k$, let $\units{k}$ denote the subgroup  of all  $p$-adic units $\zeta\in\Z_p^\times$ such that $\zeta-1$ is divisible by $p^k$.
Thus $\units{0} = \Z_p^\times$, and for each $k>0$, elements of $\units{k}$ have the form $1+a_kp^k+\cdots$. 
We are now ready to state our main theorem.

\begin{Th}\label{Main-intro}
Let $\calg$ be a $p$-local compact group. 
Then for a sufficiently large positive integer $k$ there exists a group homomorphism
\[
\A_\calg\colon \units{k} \to \adams(\calg),
\]
such that for any $\zeta\in\units{k}$, $\A_\calg(\zeta)$ is an unstable  Adams operation of $\calg$  of degree $\zeta$.
\end{Th}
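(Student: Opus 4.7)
The strategy I would follow has four stages: (i) define a $\zeta$-power map on the maximal torus of $S$; (ii) extend this to a fusion-preserving self-equivalence of the discrete $p$-toral group $S$; (iii) lift this to a self-equivalence of the linking system $\L$; and (iv) verify that the assignment $\zeta\mapsto\A_\calg(\zeta)$ is multiplicative. The hypothesis $\zeta\in\units{k}$ for large $k$ is what makes each of these stages tractable: a $p$-adic unit congruent to $1$ modulo $p^k$ acts trivially on the finite subgroup $T[p^k]\leq T$, and more generally its action is $p$-adically close to the identity on all the finite data that enters the structure of $\calg$.

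For stage (i), the identity component $T\leq S$ has the form $(\Z/p^\infty)^r$ and is naturally a $\Z_p$-module, so $\zeta\in\Z_p^\times$ acts on $T$ by $t\mapsto \zeta\cdot t$; call this automorphism $\vp_\zeta^T$. For stage (ii), I would extend $\vp_\zeta^T$ across the extension $1\to T\to S\to S/T\to 1$. Because $\vp_\zeta^T$ commutes with the conjugation action of $S/T$ on $T$, the obstruction to extension lives in a cohomology group of the finite $p$-group $S/T$ with coefficients in $T$, and the same congruence argument shows that $\vp_\zeta^T$ induces the identity on this obstruction group provided $k$ is large enough relative to the exponent of $S/T$. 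This yields an automorphism $\psi_\zeta\in\Aut(S)$. Fusion preservation is then the statement that conjugation by $\psi_\zeta$ sends $\F$-morphisms to $\F$-morphisms, and for this I would use the fact (presumably proved earlier in the paper) that $\F$ is determined by a finite amount of data on a finite set of $\F$-conjugacy classes of $\F$-centric radical subgroups. Making $k$ large enough so that $\psi_\zeta$ acts as the identity on all the finite quotients controlling these representative morphisms forces fusion preservation.

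For stage (iii), I would invoke the standard obstruction theory for lifting a fusion-preserving automorphism of $S$ to a self-equivalence of the centric linking system. The obstructions to existence and uniqueness lie in $\lim^2$ and $\lim^1$ of the center functor on the orbit category $\orbScs{\calf}$, and in the $p$-local compact setting one shows these higher limits vanish on the relevant subfunctors by reducing to the finite case via the bar construction $\bcallSc$ used throughout the paper. The lift produces a self-equivalence of $\L$, hence (after $p$-completion) of $B\calg$, restricting to the right map on the torus.

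Stage (iv) is the main obstacle. The construction $\zeta\mapsto\psi_\zeta$ can be made multiplicative on $S$ if one chooses the extensions canonically, but once one passes to the linking system a lift is only determined up to an inner automorphism, so a priori the assignment is just a map of sets $\units{k}\to\adams(\calg)$. To promote it to a group homomorphism, I would work with a normalized version of Adams operations (the $\normadams$ in the setup suggests such a rigidification exists), exploit rigidity of the linking system — namely that the group of self-equivalences of $\L$ fixing objects pointwise is controlled by $\lim^0$ of the center functor — and show that for $k$ large enough the difference between $\A_\calg(\zeta_1\zeta_2)$ and $\A_\calg(\zeta_1)\circ\A_\calg(\zeta_2)$ must be trivial because the obstruction to equality lies in a group on which $\units{k}$ acts with enough triviality. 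The key technical point is quantifying how large $k$ must be so that all the above obstructions vanish simultaneously; this will involve bounds in terms of the orders of $S/T$, the Weyl group of $\calg$, and the cohomological dimensions controlling the obstruction theory.
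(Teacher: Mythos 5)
Your stages (i) and (ii) are essentially what the paper does: the power map on the torus, the extension across $1\to S_0\to S\to \Ga\to 1$ controlled by the order of the extension class in $H^2(\Ga,S_0)$ (Proposition \ref{prop-deg-section}), and fusion preservation obtained by reducing to finitely many conjugacy classes (via the $(-)^\bullet$ construction, Proposition \ref{bullet props}) and finitely many representative morphisms, then taking $k$ large enough that $\psi_\zeta$ fixes all of this finite data. The genuine gap is in stages (iii) and (iv). You propose to lift $\psi_\zeta$ to a self-equivalence of $\call$ by obstruction theory, asserting that the relevant higher limits ($\lim^1$, $\lim^2$ of the center functor over the orbit category) ``vanish on the relevant subfunctors by reducing to the finite case''. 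No such vanishing is proved in this paper, nor was it available for fusion systems over discrete $p$-toral groups; your appeal to a ``bar construction'' and a ``normalized'' variant of Adams operations refers to nothing actually established (those are stray preamble macros, not results). Moreover, even granting an obstruction-theoretic lift, it is only well defined up to a self-functor of $\call$ covering the identity of $S$, and your fix for multiplicativity does not work as stated: the discrepancy $\A_\calg(\zeta_1\zeta_2)^{-1}\circ\A_\calg(\zeta_1)\circ\A_\calg(\zeta_2)$ covers the identity automorphism of $S$, so the group in which this discrepancy lives does not depend on $\zeta$ at all, and enlarging $k$ gives no mechanism for killing it. ``Rigidity'' via $\lim^0$ of the center functor controls natural isomorphisms, not equality of functors, and an unstable Adams operation in this paper is a functor on the nose, not a homotopy class.

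The paper avoids all of this by constructing the lift $\Psi$ explicitly rather than by obstruction theory: one fixes lifts $\tMset_{P,Q}\subseteq\morl{P}{Q}$ of the chosen representatives of $\morf{P}{Q}/N_S(Q)$ for the finitely many representatives $P,Q$ in $\calh^\bullet(\calf)$, enlarges $k$ so that $\psi_\zeta$ fixes the finitely many elements of $S$ appearing in the structure constants of these lifts (the elements $\la_\vp(g)$, $u_{\ga,\vp}$, etc.), and then defines $\Psi$ on $\call^\bullet$ by an explicit formula which fixes every morphism in $\tMset_{P,Q}$ and sends $\widehat g$ to $\widehat{\psi(g)}$; functoriality, the covering property, and multiplicativity of $\zeta\mapsto(\psi_\zeta,\Psi_\zeta)$ are then direct computations, and Proposition \ref{extend-fusion-preserving-from-lbullet-to-l} extends uniquely from $\call^\bullet$ to $\call$. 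If you want to rescue your outline, you must either supply the missing higher-limit computations in the discrete $p$-toral setting together with a genuine rigidification argument, or replace stage (iii)--(iv) by an explicit construction of the lift on a finite skeleton as the paper does.
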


The theorem will be restated as Theorem \ref{main-thm} in terms both more precise and more suitable to the theory of $p$-local compact groups. 
We will not deal in this paper with the question of finding a lower bound for the integer $k$ in the theorem, nor will we attempt to answer the obvious question of uniqueness up to homotopy, once existence is established. 

We point out that in our construction no connectivity assumption has to be made, although one would expect that some conditions, analogous to connectivity in compact Lie groups, will have to be assumed in dealing with uniqueness questions.

This paper is an elaboration of the PhD dissertation of the first author.
It is organised as follows. 
Sections \ref{sec:plcg} collects some basic facts on $p$-local compact groups.
In Section \ref{sec:adams-automorphisms}  we define and study Adams automorphisms of $p$-toral groups, and in Section \ref{sec:unstable-adams-operations} we define and discuss unstable Adams operations on $p$-local compact groups. 
Finally, in Section \ref{sec:main-construction} we prove Theorem \ref{Main-intro}.
More precisely, for a $p$-local compact group $\calg$, we construct in Theorerm \ref{main-thm} families of subsgroups of the group  $\adams(\calg)$ of unstable Adams operations of $\calg$.

The authors are grateful to Alex Gonzalez for reading several versions of their manuscript and making useful suggestions.

\newsect{Preliminaries on $p$-local Compact Groups.}\label{sec:plcg}
We recall the definition and some basic properties of $p$-local compact groups. The reader is referred to \cite{BLO3} for a comprehensive account of these objects. 

The fundamental objects in this theory are discrete $p$-toral groups. By $\Z/p^\infty$ we mean the union of all $\Z/p^r$ with respect to the natural inclusions. 

\begin{Defi}\label{dpt-groups}
A group of the form $(\Z/p^\infty)^r$ for some positive integer $r$ will be referred to as a \emph{discrete $p$-torus}. 
A \emph{discrete $p$-toral group} is an extension $S$ of a finite $p$-group $\Gamma$ by a discrete $p$-torus $S_0$. 
The normal subgroup $S_0$ will be referred to as the \emph{maximal torus} or the \emph{identity component} of $S$, and the quotient group $\Gamma\cong S/S_0$ will be called the \emph{group of components} of $S$.
\end{Defi}

The identity component $P_0$ of a discrete $p$-toral group $P$ can be characterised  as the subset of all infinitely $p$-divisible elements in $P$, and also as the minimal subgroup of finite index in $P$.  
Thus, $P_0$ is a characteristic subgroup.
The rank of $P$ is the number $r=\rk(P)$ such that $P_0\cong(\Z/p^\infty)^r$.

Recall that given $P,Q \leq S$ the elements $g \in S$ such that $gPg^{-1} \leq Q$ form the transporter set $N_S(P,Q)$.
The set $\Hom_S(P,Q)$ of all homomorphisms $c_g \colon P \to Q$ , which are restrictions of an inner automorphism of $S$ is obtained by identifying two elements in $N_S(P,Q)$ if they differ by an element of the centraliser $C_S(P)$. Let $\Inj(P,Q)$ denote the set of all the injective homomorphisms $P \to Q$.
We are now ready to recall the definition of  fusion systems over discrete $p$-toral groups.

\begin{Defi} \label{Frob.def.}
A \emph{fusion system} $\calf$ over a discrete $p$-toral group $S$ is a 
category whose objects are the subgroups of $S$, and whose morphism sets
$\homf(P,Q)$ satisfy the following conditions:
\begin{enumerate}
\item $\hom_S(P,Q)\subseteq\homf(P,Q)\subseteq\Inj(P,Q)$ for all $P,Q\le S$.

\item Every morphism in $\calf$ factors as an isomorphism in $\calf$
followed by an inclusion.
\end{enumerate}
\end{Defi}

Two subgroups $P,P'\le{}S$ are called \emph{$\calf$-conjugate} if $\isof(P,P')\ne\emptyset$. 
A subgroup $P\le S$ is said to be $\calf$-\emph{centric} if for every subgroup $P'\le S$ which is $\calf$-conjugate to $P$, $C_S(P')=Z(P')$.

The theory of $p$-local compact groups makes a very essential use of an extra set of axioms one requires fusion systems to satisfy. 
Fusion systems that satisfy these axioms are called \emph{saturated}. 
In this paper we do not directly need to use the saturation axioms, and we will therefore not spell them out. 
It should be emphasised, however, that throughout the paper only saturated fusion systems will be considered and we will heavily rely upon some of their properties.
The interested reader is referred to \cite[Def. 2.2]{BLO3}.

The next fundamental concept in the theory is that of a centric linking system. 

\begin{Defi}  \label{L-cat}
Let $\calf$ be a fusion system over a discrete $p$-toral group $S$.  A 
\emph{centric linking system associated to $\calf$} is a category $\call$ 
whose objects are the $\calf$-centric subgroups of $S$, together with a 
functor 
	$$ \pi \:\call\Right5{}\calf^c, $$
and ``distinguished'' monomorphisms $P\Right1{\delta_P}\Aut_{\call}(P)$ 
for each $\calf$-centric subgroup $P\le{}S$, which satisfy the following 
conditions.

\begin{enumerate}  
\renewcommand{\labelenumi}{\textup{(\Alph{enumi})}}%
\item  $\pi$ is the identity on objects and surjective on morphisms.
More precisely, for each pair of objects 
$P,Q\in\call$, $Z(P)$ acts freely on 
$\Mor_{\call}(P,Q)$ by composition (upon identifying $Z(P)$ with 
$\delta_P(Z(P))\le\Aut_{\call}(P)$), and $\pi$ induces a bijection
	$$ \Mor_{\call}(P,Q)/Z(P) \Right5{\cong} \homf(P,Q). $$

\item  For each $\calf$-centric subgroup $P\le{}S$ and each $g\in{}P$, 
$\pi$ sends $\delta_P(g)\in\Aut_{\call}(P)$ to 
$c_g\in\Aut_{\calf}(P)$.

\item  For each $f\in\Mor_{\call}(P,Q)$ and each $g\in{}P$, the following 
square commutes in $\call$:
\[\xymatrix{
P \ar[rr]^{f}\ar[d]_{\delta_P(g)} & &
Q \ar[d]^{\delta_Q(\pi(f)(g))}
\\ 
P \ar[rr]^{f} & &
Q.
}\]
\end{enumerate}

\end{Defi}

Next we recall the definition of our fundamental object. 

\begin{Defi} \label{p-lcg}
A \emph{$p$-local compact group} is a triple $\calg=\SFL$, where $S$ is 
a discrete $p$-toral group, $\calf$ is a saturated fusion system over $S$, and 
$\call$ is 
a centric linking system associated to $\calf$.  The \emph{classifying space} of 
$\calg$ is the $p$-completed nerve $|\call|\pcom$, which we will generally denote by $B\calg$.
\end{Defi}

In \cite{BLO3} the authors show that  compact Lie groups and $p$-compact groups give rise to particular examples of $p$-local compact groups. Another large family of examples arises from  linear torsion groups. In all cases the respective classifying space coincides up to homotopy (after $p$-completion in the case of genuine groups) with the classifying space of the $p$-local compact group it gives rise to.

Before moving on, we will need to record several key properties of linking systems that will be used later.

\begin{Prop}\label{prop-extend-hats-in-L}
There is a choice of functions $\de_{P,Q} \colon N_S(P,Q) \to \morl{P}{Q}$ for all $\calf$-centric $P,Q \leq S$ such that the following  holds.
\begin{itemize}
\item
For all $x \in N_S(P,Q)$, $\pi(\de_{P,Q}(x))=c_x$,

\item
The restriction of $\de_{P,P}\colon N_S(P) \to \Aut_\L(P)$  to $P$ is equal to the distinguished monomorphism $\de_P \colon P \to \Aut_\call(P)$, and 

\item
$\de_{Q,R}(x) \circ \de_{P,Q}(y) = \de_{P,R}(xy)$ for all $x \in N_S(Q,R)$ and all $y \in N_S(P,Q)$.
\end{itemize}
\end{Prop}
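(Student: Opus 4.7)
The strategy is to construct the family $\{\delta_{P,Q}\}$ in two stages: first extend each distinguished monomorphism $\delta_P$ to the full normaliser $N_S(P)$, and then use a coherent choice of ``distinguished inclusions'' in $\L$ to define $\delta_{P,Q}$ for $P\neq Q$.

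In the first stage, for each $\F$-centric $P\leq S$ I would produce a group homomorphism $\delta_{P,P}\colon N_S(P)\to\Aut_\L(P)$ whose restriction to $P$ equals $\delta_P$ and which satisfies $\pi(\delta_{P,P}(x))=c_x$. Axiom (A) of the linking system supplies a central extension
$$1\longrightarrow Z(P)\longrightarrow\Aut_\L(P)\overset{\pi}{\longrightarrow}\Aut_\F(P)\longrightarrow 1,$$
the kernel being identified with $Z(P)$ via $\delta_P$. Since $P$ is $\F$-centric, $C_S(P)=Z(P)$, so conjugation of $N_S(P)$ on $P$ gives an injection $N_S(P)/Z(P)\hookrightarrow\Aut_\F(P)$. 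Pulling back the central extension along this injection yields a central extension of $N_S(P)/Z(P)$ by $Z(P)$, and the desired lift $\delta_{P,P}$ amounts to an isomorphism of this pull-back with the natural extension $1\to Z(P)\to N_S(P)\to N_S(P)/Z(P)\to 1$ that restricts to the identity on $Z(P)$. That such an isomorphism exists is essentially what the saturation axioms for $\F$ force, while axiom (C) ensures that the restriction to $Z(P)\leq P$ agrees with $\delta_P|_{Z(P)}$.

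In the second stage I would fix, for each pair of $\F$-centric subgroups $P\leq Q$ in $S$, a morphism $\iota_{P,Q}\in\Mor_\L(P,Q)$ with $\pi(\iota_{P,Q})=\mathrm{incl}_{P,Q}$, normalised so that $\iota_{P,P}=\mathrm{id}$ and $\iota_{Q,R}\circ\iota_{P,Q}=\iota_{P,R}$. For $x\in N_S(P,Q)$, set $P^x=xPx^{-1}\leq Q$, also $\F$-centric. I would choose an $\F$-centric overgroup $R\leq S$ containing $P$, $P^x$ and $x$, apply Stage 1 to obtain $\delta_{R,R}(x)\in\Aut_\L(R)$, and restrict along the fixed inclusions to a well-defined isomorphism $\psi\in\Iso_\L(P,P^x)$. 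Setting $\delta_{P,Q}(x)=\iota_{P^x,Q}\circ\psi$ then gives the required morphism, and independence of the choice of $R$ follows from the uniqueness of restrictions in linking systems.

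The main obstacle is Stage 1: identifying the pulled-back central extension with the natural extension on $N_S(P)$ is where saturation of $\F$ is invoked non-trivially. Once this is in place Stage 2 is essentially bookkeeping, and the three listed properties follow from the construction: (i) and (ii) are immediate, while (iii) reduces to the compositional transitivity of the distinguished inclusions, the homomorphism property of each $\delta_{P,P}$ obtained in Stage 1, and axiom (C), which guarantees that composition with morphisms in $\L$ transports $\delta$-elements according to the associated $\F$-conjugation.
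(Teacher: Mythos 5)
There is a genuine gap, and it sits exactly where you locate the ``main obstacle''. Your Stage 1 claim --- that the pullback of the central extension $1 \to Z(P) \to \Aut_\call(P) \to \Aut_\calf(P) \to 1$ along $\Aut_S(P)\cong N_S(P)/Z(P)$ is isomorphic, compatibly with $Z(P)$, to $1 \to Z(P) \to N_S(P) \to N_S(P)/Z(P) \to 1$ --- is essentially the content of the proposition for the pair $(P,P)$, and asserting that saturation ``essentially forces'' it is not an argument (for $P$ not fully normalized it is not a quotable standard fact). Worse, even granting such an isomorphism of extensions you only control its behaviour on $Z(P)$, as you yourself say; the second bullet requires $\de_{P,P}|_P=\de_P$ on all of $P$, and two lifts that agree on $Z(P)$ can still differ on $P$ by a nontrivial crossed homomorphism $P\to Z(P)$. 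Finally, because each $\de_{R,R}$ in your Stage 1 comes from an independent splitting choice, there is no reason that restrictions of $\de_{R,R}(x)$ and $\de_{R',R'}(x)$ to $P$ agree for two different admissible overgroups $R,R'$; so the claimed independence of $R$ in Stage 2, and with it the cocycle identity in the third bullet, do not follow. (Your a priori transitive system of inclusions $\iota_{P,Q}$ is also part of what must be constructed --- it is the $x=e$ case of the statement --- though that is easily repaired.)

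The repair is to notice that no Stage 1 is needed at all, and this is the route the paper takes (following \cite[Prop.~1.11]{BLO2}): the group $S$ itself is $\calf$-centric and $\de_S\colon S\to\Aut_\call(S)$ is already given as part of the linking system, and it is a homomorphism defined on all of $S\supseteq N_S(P,Q)$. One chooses only lifts $\iota_P\in\morl{P}{S}$ of the inclusions (with $\iota_S=\Id_S$), and defines $\de_{P,Q}(x)$, for $x\in N_S(P,Q)$, as the unique morphism with $[\de_{P,Q}(x)]=c_x$ and $\iota_Q\circ\de_{P,Q}(x)=\de_S(x)\circ\iota_P$, using the unique factorization property of linking systems (\cite[Lemma~4.3(a)]{BLO3}, i.e.\ Lemma~\ref{lem-factor-in-L}\ref{lem-factor-in-L:mon}). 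In effect this is your Stage 2 with the single choice $R=S$ for every pair, which removes both sources of incoherence: the second bullet follows from axiom (C) applied to $\iota_P$ (namely $\iota_P\circ\de_P(g)=\de_S(g)\circ\iota_P$ for $g\in P$) together with uniqueness, and the third bullet follows from uniqueness together with $\de_S(x)\circ\de_S(y)=\de_S(xy)$. As written, your proposal does not establish the second and third bullets.
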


\begin{proof}
For every $P \in \calf^c$ choose once and for all morphisms $\iota_P \in \morl{P}{S}$ which project to the inclusion $P \leq S$ in $\F$. In particular let $\iota_S$ be the identity on $S$.
The proof is identical to that in \cite[Prop. 1.11]{BLO2}.
One uses \cite[Lemma 4.3(a)]{BLO3} instead of \cite[Prop. 1.10(a)]{BLO2}.
\end{proof}

\begin{Rmk}\label{remark-hats}
We will write $[\vp]$ for the image in $\calf$ of a morphism $\vp \in \call$ under the projection $\pi$.
Throughout this paper we will always assume that $p$-local compact groups are equipped with a choice of functions $\de_{P,Q}$ as in Proposition \ref{prop-extend-hats-in-L}.
The image of $g \in N_S(P,Q)$ under $\delta_{P,Q}$ will be denoted $\widehat{g}$, and with this notation $\widehat{g} \circ \widehat{h}=\widehat{gh}$.
For $P \leq Q$, the image of the identity element $e \in N_S(P,Q)$ will be denoted $\iota_P^Q$.
Note that $\iota_P^P=\Id_P$, and that  $[\widehat{g}]=c_g$.
\end{Rmk}

\begin{Lem}\label{lem-factor-in-L}
Let $\SFL$ be a $p$-local compact group.
\begin{enumerate}
\item
\label{lem-factor-in-L:mon}
Let $P \xto{a} Q \xto{b} R$ be morphisms in $\F^c$.
If $\be \in \morl{Q}{R}$ and $\ga \in \morl{P}{R}$ satisfy $[\be]=b$ and $[\ga]=b \circ a$ then there exists a unique 
$\al \in \morl{P}{Q}$ such that $[\al]=a$ and $\be \circ \al = \ga$.

\item
\label{lem-factor-in-L:epi}
If $P,Q,R$ and $a,b$ and $\ga$ as above and if $\al \in \morl{P}{Q}$ satisfies $[\al]=a$ then there exists a unique $\be \in \morl{Q}{R}$ such that $\be \circ \al = \ga$, (but $\beta$ is not generally a lift of $b$).

\item
\label{lem-factor-in-L:fac}
For any $\vp \in \morl{P}{Q}$ there exists a unique isomorphism $\vp' \in \Iso_\L(P,P')$ such that $\vp = \iota_{P'}^Q \circ \vp'$.

\end{enumerate}
\end{Lem}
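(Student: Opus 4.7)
These three statements are standard consequences of axioms (A)--(C) of centric linking systems, analogous to \cite[Lemma 1.10]{BLO2} in the $p$-local finite setting. The key input throughout is axiom (A), which says that the $\pi$-fibres of morphism sets in $\L$ are free regular $Z(P)$-orbits under precomposition with $\delta_P(-)$. My plan is to prove (i) directly from (A), then to deduce (iii) by lifting a factorisation in $\F$, and finally to obtain (ii) by combining (iii) with an extension property for morphisms in $\L$ coming from \cite{BLO3}.

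For (i) I would first choose any lift $\al_0 \in \morl{P}{Q}$ of $a$, which exists by the surjectivity of $\pi$ on morphisms. Both $\be \circ \al_0$ and $\ga$ lie in $\morl{P}{R}$ and project under $\pi$ to $b \circ a$, so by (A) there is a unique $z \in Z(P)$ with $\be \circ \al_0 \circ \delta_P(z) = \ga$. Setting $\al := \al_0 \circ \delta_P(z)$ gives $[\al] = a \circ c_z = a$ (as $z$ is central) and $\be \circ \al = \ga$. Uniqueness is immediate: any other solution $\al'$ must satisfy $\al' = \al \circ \delta_P(z')$ for some $z' \in Z(P)$ (both lift $a$), and $\be \circ \al' = \be \circ \al$ combined with the freeness of the $Z(P)$-action on $\morl{P}{R}$ forces $z' = 1$.

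For (iii) I would set $P' := [\vp](P)$. By Definition \ref{Frob.def.}(ii) the morphism $[\vp]$ factors in $\F$ as an isomorphism $\vp_0 \colon P \to P'$ followed by the inclusion $P' \hookrightarrow Q$. Since $P \in \F^c$ and $P'$ is $\F$-conjugate to $P$, the subgroup $P'$ is $\F$-centric too, so $P' \in \Ob(\L)$ and $\iota_{P'}^Q$ is defined by Remark \ref{remark-hats}. Applying (i) to the triangle $P \to P' \to Q$ with $\be = \iota_{P'}^Q$ and $\ga = \vp$ yields a unique $\vp' \in \morl{P}{P'}$ lifting $\vp_0$ with $\iota_{P'}^Q \circ \vp' = \vp$. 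Because $\vp_0$ is an $\F$-isomorphism and lifts in $\L$ of $\F$-isomorphisms are themselves $\L$-isomorphisms (a standard consequence of (A) in the centric setting), $\vp' \in \Iso_\L(P,P')$.

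For (ii), uniqueness reduces to the fact that morphisms in $\L$ are epimorphisms in the categorical sense, a standard property of centric linking systems. For existence, use (iii) to factor $\al = \iota_{P''}^Q \circ \al^*$ with $\al^* \in \Iso_\L(P,P'')$ and $P'' = a(P) \in \F^c$. Put $\psi := \ga \circ (\al^*)^{-1} \in \morl{P''}{R}$; one checks that $[\psi] = b|_{P''}$. The problem then becomes that of extending $\psi$ along the distinguished lift $\iota_{P''}^Q$ of the inclusion, producing $\be \in \morl{Q}{R}$ with $\be \circ \iota_{P''}^Q = \psi$; then $\be \circ \al = \psi \circ \al^* = \ga$. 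This extension step is where I expect the main difficulty: axioms (A)--(C) alone do not suffice, and one must invoke the extension lemma for morphisms of centric linking systems from \cite[Lemma 4.3]{BLO3}, the compact analogue of \cite[Proposition 1.10]{BLO2}.
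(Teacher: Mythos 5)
Parts \ref{lem-factor-in-L:mon} and \ref{lem-factor-in-L:fac} of your proposal are sound: your direct argument for \ref{lem-factor-in-L:mon} from axiom (A) (choose a lift of $a$, correct by the free $Z(P)$-action, and use freeness again for uniqueness) is in fact more self-contained than the paper, which simply cites \cite[Proposition 4.3(a)]{BLO3}, and your deduction of \ref{lem-factor-in-L:fac} from \ref{lem-factor-in-L:mon} is exactly the paper's. The genuine gap is in part \ref{lem-factor-in-L:epi}, and specifically in uniqueness. You dispose of it by invoking ``morphisms in $\L$ are categorical epimorphisms, a standard property of centric linking systems''; but that property is precisely equivalent to the uniqueness you are asked to prove, and in this paper it is Corollary \ref{cor-morphisms-of-L-are-mono-and-epi}, which is \emph{deduced from} Lemma \ref{lem-factor-in-L}\ref{lem-factor-in-L:epi} (the reference attached to it, \cite[Proposition 3.10]{BCGLO1}, is the $p$-local \emph{finite} case, so it cannot simply be quoted here). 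Note also that $\be$ in \ref{lem-factor-in-L:epi} need not be a lift of $b$, so uniqueness does not follow from axiom (A) applied to the fibre over $b$. The missing argument is the real content of the paper's proof: if $\be_1\circ\iota_P^Q=\be_2\circ\iota_P^Q$ then $[\be_1]|_P=[\be_2]|_P$, so by \cite[Proposition 2.8]{BLO3} there is $g\in Z(P)$ with $[\be_2]=[\be_1]\circ c_g$; axioms (A) and (B) then give $\be_2=\be_1\circ\widehat{gz}$ for some $z\in Z(Q)\leq Z(P)$, and commuting $\widehat{gz}$ past $\iota_P^Q$ and using the freeness of the $Z(P)$-action forces $gz=1$.

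Your assessment of the existence step in \ref{lem-factor-in-L:epi} is also off: you claim axioms (A)--(C) do not suffice and defer the ``main difficulty'' to an extension lemma in \cite{BLO3}. No extension lemma is needed, because $b\in\homf(Q,R)$ is part of the data. After your reduction (equivalently the paper's: assume $P\leq Q$ and $\al=\iota_P^Q$), choose any $\be'\in\morl{Q}{R}$ with $[\be']=b$; then $\be'\circ\iota_P^Q$ and $\ga$ both lift $b\circ a$, so by axiom (A) there is $z\in Z(P)$ with $\ga=\be'\circ\iota_P^Q\circ\widehat{z}=\be'\circ\widehat{z}\circ\iota_P^Q$ (the last equality by Proposition \ref{prop-extend-hats-in-L}), and since $z\in Z(P)\leq Q$ one may take $\be=\be'\circ\widehat{z}$. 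So existence is elementary from the axioms, and the only genuinely delicate point of \ref{lem-factor-in-L:epi} is the uniqueness argument your proposal omits.
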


\begin{proof}
Point \ref{lem-factor-in-L:mon} is proven in \cite[Proposition 4.3(a)]{BLO3}, and point \ref{lem-factor-in-L:fac} is an immediate consequence of \ref{lem-factor-in-L:mon}, since one has a factorization $[\vp]=\incl_{P'}^Q \circ f$, for some $f \in \Iso_\F(P,P')$.

It remains to prove point \ref{lem-factor-in-L:epi}.  We may assume, in light of point \ref{lem-factor-in-L:fac}, that $P \leq Q$ and that $\al=\iota_P^Q$.
Choose an arbitrary $\be' \in \morl{Q}{R}$ such that $[\be']=b$.
Then $[\ga]=b \circ a = [\be' \circ \iota_P^Q]$.
By axiom (A) in Definition \ref{L-cat} there is some $z \in Z(P)$ such that $\ga=\be' \circ \iota_P^Q \circ \widehat{z}=\be' \circ \widehat{z} \circ \iota_P^Q$.
Note that $z \in Z(P) \leq Q$ and therefore $\be\defeq\be' \circ \widehat{z} \in \morl{Q}{R}$ satisfies $\ga=\be \circ \iota_P^Q$.

To prove uniqueness, assume that $\be_1 \circ \iota_P^Q = \be_2 \circ \iota_P^Q$ for some $\be_1, \be_2 \in \morl{Q}{R}$.
In particular $[\be_1]|_P=[\be_2]|_P$, and \cite[Proposition 2.8]{BLO3} implies that there exists $g \in Z(P)$ such that $[\be_2]=[\be_1] \circ c_g$.
By axioms (A) and (B) of Definition \ref{L-cat} and by Remark \ref{remark-hats}, there is some $z \in Z(Q)$ such that $\be_2 = \be_1 \circ \widehat{gz}$.
Note that $Z(Q)=C_S(Q) \leq C_S(P)=Z(P)$ so $gz \in Z(P)$.
By the hypothesis on $\be_1$ and $\be_2$ we now get $\be_1 \circ \iota_P^Q =\be_2\circ \iota_P^Q = \be_1 \circ \widehat{gz} \circ \iota_P^Q = \be_1 \circ \iota_P^Q \circ \widehat{gz}$.
Since $Z(P)$ acts freely on $\morl{P}{R}$ we deduce that $gz=1$ and therefore $\be_1=\be_2$, whence the morphism $\be$ is unique.
\end{proof}

\begin{Cor}[{cf. \cite[Proposition 3.10]{BCGLO1}}]\label{cor-morphisms-of-L-are-mono-and-epi}
All morphisms in a centric linking system $\call$ associated to a saturated fusion system are both monomorphisms and epimorphisms in the categorical sense.
\end{Cor}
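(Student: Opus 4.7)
The plan is to reduce both cancellation statements to the special case of the ``inclusion'' morphisms $\iota_P^Q$ with $P \leq Q$ in $\calf^c$. Lemma \ref{lem-factor-in-L}(iii) asserts that any morphism $\vp \in \morl{P}{Q}$ factors as $\vp = \iota_{P'}^Q \circ \vp'$ with $\vp' \in \Iso_\call(P,P')$. Since isomorphisms in any category are both monomorphisms and epimorphisms, a composite of the form ``inclusion after isomorphism'' is mono (resp.\ epi) as soon as the corresponding inclusion is. Hence the general case of the corollary reduces to showing that each $\iota_P^Q$ is both a monomorphism and an epimorphism in $\call$.

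For the epimorphism assertion on $\iota_P^Q$, I would proceed as follows. Suppose $\be_1,\be_2 \in \morl{Q}{R}$ satisfy $\be_1 \circ \iota_P^Q = \be_2 \circ \iota_P^Q$. Set $\ga \defeq \be_1 \circ \iota_P^Q$, $a \defeq \incl_P^Q$, $b \defeq [\be_1]$, and $\al \defeq \iota_P^Q$. Then $[\al]=a$ and $[\ga] = b \circ a$, so the hypotheses of Lemma \ref{lem-factor-in-L}(ii) are satisfied; its uniqueness clause immediately yields $\be_1 = \be_2$.

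For the monomorphism assertion on $\iota_P^Q$, suppose $\al_1,\al_2 \in \morl{R}{P}$ satisfy $\iota_P^Q \circ \al_1 = \iota_P^Q \circ \al_2$. Applying the projection $\pi$ gives $\incl_P^Q \circ [\al_1] = \incl_P^Q \circ [\al_2]$ in $\calf$, and injectivity of the set-theoretic inclusion forces $[\al_1]=[\al_2]$. Now Lemma \ref{lem-factor-in-L}(i), applied with $a \defeq [\al_1]$, $b \defeq \incl_P^Q$, $\be \defeq \iota_P^Q$ and $\ga \defeq \iota_P^Q \circ \al_1$, produces a \emph{unique} lift of $a$ through $\be$ whose composite with $\be$ equals $\ga$; both $\al_1$ and $\al_2$ qualify, so $\al_1 = \al_2$.

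There is no real obstacle here: Lemma \ref{lem-factor-in-L} is essentially tailored to supply exactly the two uniqueness statements we need, and the main content of the argument is the observation that part (iii) of that lemma reduces the two cancellation problems from arbitrary morphisms to inclusions. The only pitfall I can foresee is trying to prove the mono or epi property for a general $\vp$ directly; it is the factorisation step that keeps the argument clean and bookkeeping-free.
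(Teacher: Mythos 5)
Your proof is correct and relies on exactly the same ingredients as the paper's: project to $\calf$, use injectivity there, and then invoke the uniqueness clauses of Lemma \ref{lem-factor-in-L}\ref{lem-factor-in-L:mon} and \ref{lem-factor-in-L:epi}. The paper applies these directly to arbitrary morphisms (every $[\beta]$ is an injective homomorphism, and part \ref{lem-factor-in-L:epi} already allows an arbitrary $\alpha$ lifting $a$), so your preliminary reduction to the inclusions $\iota_P^Q$ via part \ref{lem-factor-in-L:fac} is harmless but not needed.
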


\begin{proof}
If $\be \circ \al_1=\be \circ \al_2$ in $\L$ then $[\al_1]=[\al_2]$, since $[\beta]$ is group monomorphism. Therefore $\alpha_1=\alpha_2$, by point \ref{lem-factor-in-L:mon} of Lemma \ref{lem-factor-in-L}. Hence all the morphisms in $\L$ are monomorphisms.
Similarly, by point \ref{lem-factor-in-L:epi} of the same Lemma, if $\be_1 \circ \al = \be_2 \circ \al$ in $\L$, then $\be_1=\be_2$, hence every morphism in $\L$ is an epimorphism.
\end{proof}

An important tool in the study of $p$-local compact groups, which will be useful in this paper as well, is the reduction of the object set of $\calf$ to a subset of subgroups of $S$ which contains only a finite number of $S$ conjugacy classes. The precise details of this construction are only relevant in the proof of Proposition \ref{fusion-preserving-respect-bullet} and are recalled there. 
It suffices to say that with any $P\le S$ one associates a subgroup $P^\bullet$ containing $P$. 
The family of objects which results from this construction has some remarkably useful properties, as we state below. 
Set
$$ 
	\calh^\bullet(\calf)=\{P^\bullet\,|\,P\le{}S\}, 
$$
and let $\calf^\bullet\subseteq\calf$ be the full subcategory with object set $\calh^\bullet(\calf)$.

The following is a summary of some important properties of this construction. 
The statements and proofs are contained in \cite[Lem. 3.2, Prop. 3.3, Cors. 3.4-5]{BLO3}

\begin{Prop} \label{bullet props}
The following hold for every saturated fusion system $\calf$ over a discrete 
$p$-toral group $S$.
\begin{enumerate}
\item 
\label{bullet props:Sccs}
The set $\calh^\bullet(\calf)$ contains finitely many $S$-conjugacy classes of subgroups of $S$.

\item 
\label{bullet props:functor}
For subgroups $P,Q\le{}S$, any morphism $f\in\homf(P,Q)$ extends to a unique morphism $f^\bullet\in\homf(P^\bullet,Q^\bullet)$.
Thus  $(-)^\bullet \colon \calf \to{} \calf^\bullet$ is a functor, which is left adjoint to the natural inclusion $\calf^\bullet\subseteq\calf$ and whose unit of adjunction is the inclusion $P \leq P^\bullet$.
Moreover, $(-)^\bullet$ is an idempotent functor, namely $(P^\bullet)^\bullet = P^\bullet$, and it carries inclusions to inclusions, i.e., if $P \leq Q$ then $P^\bullet \leq Q^\bullet$.
\end{enumerate}
\end{Prop}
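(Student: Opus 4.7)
The plan is to recall and follow the explicit construction of the bullet operation from \cite[\S3]{BLO3}. Let $T=S_0$ be the maximal torus of $S$. The key observation is that conjugation yields a homomorphism $S\to\Aut(T)\cong\GL_r(\padic)$ with finite image $W$. Using $W$, one constructs, for each $P\le S$, a canonical enlargement $P^\bullet\le S$ of the form $P\cdot(P\cap T)^\bullet$, where $(P\cap T)^\bullet$ is the minimal subgroup of $T$ containing $P\cap T$ and closed under the condition dictated by the $P/(P\cap T)$-action on $T$ (roughly, stability under passing from a finitely generated subgroup to the torus it generates at each irreducible summand of $T$ under the finite action).

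For statement \ref{bullet props:Sccs}, I would argue that $P^\bullet\cap T$ is determined by two finite pieces of data: the image of $P$ in the finite $p$-group $S/T$, and the restricted finite action on $T$. Since $S/T$ has finitely many subgroups, and for each such subgroup only finitely many bullet-closed subgroups of $T$ are stable under the corresponding $W$-action, the collection of subgroups of the form $P^\bullet$ breaks up into finitely many $S$-conjugacy classes.

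For statement \ref{bullet props:functor}, given $f\in\homf(P,Q)$, the restriction $f|_{P\cap T}$ maps $P\cap T$ into $Q\cap T$ equivariantly with respect to the relevant actions. Because the bullet closure on tori is defined purely in terms of a finite group action, $f|_{P\cap T}$ extends uniquely to $(P\cap T)^\bullet\to(Q\cap T)^\bullet$; the saturation extension axiom, applied to the normal torus piece $(P\cap T)^\bullet\triangleleft P^\bullet$, then produces a unique $f^\bullet\in\homf(P^\bullet,Q^\bullet)$ extending $f$. Uniqueness yields functoriality of $(-)^\bullet$, the adjunction property with the natural inclusion $\calf^\bullet\hookrightarrow\calf$ (with the inclusion $P\le P^\bullet$ as unit), and the idempotence $(P^\bullet)^\bullet=P^\bullet$. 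Inclusion preservation is immediate from the construction, since $P\le Q$ forces $P\cap T\le Q\cap T$ and the closure operation is monotone in both the torus factor and the component group factor.

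The main obstacle is the unique extension of $f$ to $f^\bullet$, which hinges on the extension axiom for saturated fusion systems and requires verifying that the bullet construction preserves $\calf$-centricity so that $P^\bullet,Q^\bullet$ actually live in the relevant morphism sets. Both points are handled in the cited results from \cite{BLO3}, with the finiteness of the image $W\le\Aut(T)$ providing the uniform control that makes every step of the argument work simultaneously.
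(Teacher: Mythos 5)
There is a genuine gap, and it starts with the construction itself. The paper does not prove this proposition; it quotes it from \cite[Lem.~3.2, Prop.~3.3, Cors.~3.4--5]{BLO3}, and the definition it relies on (recalled in the proof of Proposition \ref{fusion-preserving-respect-bullet}) is $P^\bullet = P\cdot I(P^{[m]})$, where $P^{[m]}=\{g^{p^m}\mid g\in P\}$ with $p^m=|S/S_0|$, $I(Q)=S_0^{C_W(Q)}$, and $W=\Aut_\calf(S_0)$. Your version differs in two essential ways: you take $W$ to be the image of $S$ in $\Aut(S_0)$ rather than the full (finite) group $\Aut_\calf(S_0)$, and you build the enlargement from $P\cap S_0$ rather than from the set of $p^m$-th powers. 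Both changes destroy exactly the feature that makes part \ref{bullet props:functor} work: your key claim that ``$f|_{P\cap T}$ maps $P\cap T$ into $Q\cap T$ equivariantly'' is false in general, since a toral element can be $\calf$-conjugate to a non-toral one (e.g.\ in the fusion system of $SU(2)$ at $p=2$, a toral element of order $4$ is conjugate to a non-toral one, so $f(P\cap S_0)\not\le Q\cap S_0$). By contrast, $f(P^{[m]})=f(P)^{[m]}\le S_0$ automatically, because all $p^m$-th powers in $S$ lie in the torus; this is precisely why \cite{BLO3} uses $P^{[m]}$ and not $P\cap S_0$.

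The extension mechanism is also not the one you propose. Invoking the saturation extension axiom for ``$(P\cap T)^\bullet\trianglelefteq P^\bullet$'' neither has its hypotheses verified here nor can it yield uniqueness, which is an essential part of the statement. In \cite{BLO3} the point is rather that $f|_{P^{[m]}}$, being a morphism between subgroups of $S_0$, is the restriction of some $w\in W=\Aut_\calf(S_0)$ (this is where saturation enters), and any two such $w$ differ by an element of $C_W(P^{[m]})$, which acts trivially on $I(P^{[m]})=S_0^{C_W(P^{[m]})}$ by the very definition of $I$; this gives a well-defined and unique extension over $I(P^{[m]})$, hence over $P^\bullet=P\cdot I(P^{[m]})$, and uniqueness is what drives functoriality, idempotence and the adjunction. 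Your worry about preserving $\calf$-centricity is a red herring: the proposition concerns $\calf$ itself, whose objects are all subgroups of $S$, so no centricity is needed. Finally, the finiteness argument in \ref{bullet props:Sccs} is looser than you state: $P^\bullet\cap S_0$ is not determined by the image of $P$ in $S/S_0$ together with the $W$-action (subgroups $P\le S_0$ of bounded exponent already give varying $P^\bullet$), and the correct count combines the finitely many subgroups of the form $I(Q)$ (fixed points of subgroups of the finite group $W$) with a bound on the exponent and number of generators of $P^\bullet$ modulo $I(P^{[m]})$, as in \cite[Lem.~3.2]{BLO3}.
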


\begin{Defi}\label{def-fusion-preserving}
Let $\calf$ be a fusion system over $S$. An automorphism $\phi \colon S \to S$ is called \emph{fusion preserving} if for every morphism $f \in \morf{P}{Q}$ there exists a morphism $f' \in \morf{\phi(P)}{\phi(Q)}$ such that $\phi \circ f = f' \circ \phi$.
\end{Defi}

If $\calf$ is a fusion system over $S$, and $\phi$ is a fusion preserving automorphism of $S$, then
 $\phi$ induces an automorphism $\phi_* \colon \F \to \F$ where $\phi_*(P)=\phi(P)$ for all $P \leq S$ and $\phi_*(f)=\phi \circ f \circ \phi^{-1}$ for all morphisms $f \in \F$. Such automorphisms of $\calf$ are referred to  in the literature as ``isotypical". 

\begin{Defi}\label{def-functor-covers-F}
Let $\calg = \SFL$ be a $p$-local compact group. Let $\phi \colon S\to S$ be a fusion preserving automorphism.
We say that a functor $\Phi \colon \call' \to \call''$ where $\call'$ and $\call''$ are subcategories of $\call$, \emph{covers} $\phi$ if 
\begin{enumerate}
\item $\pi|_{\call''} \circ \Phi = \phi_* \circ \pi|_{\call'}$ where $\pi\colon\call\to\calf$ is the projection functor, and $\phi_*$ is the isotypical automorphism of $\F$ induced by $\phi$.
\item For each $P,Q\in\call'$ and $g\in N_S(P,Q)$, $\Phi(\widehat{g}) = \widehat{\phi(g)}$.
\end{enumerate}
\end{Defi}

For a $p$-local compact group $(S,\calf,\call)$
let $\calf^{c\bullet}$ denote the full subcategory of $\calf^\bullet$ consisting of the $\F$-centric objects $P \leq S$.
Let $\call^\bullet$ be the full subcategory of $\call$ whose objects are those  of $\calf^{c\bullet}$.

\begin{Prop}\label{prop-lift-bullet-to-l}
There is a unique functor $\call \xto{(-)^\bullet} \call^\bullet \subseteq \L$ which lifts $\calf \xto{(-)^\bullet} \calf^\bullet$ and which satisfies $\iota_Q^{Q^\bullet} \circ \vp = \vp^\bullet \circ \iota_P^{P^\bullet}$ for every morphism $\vp \in \morl{P}{Q}$.
Moreover, $(\widehat{g})^\bullet=\widehat{g}$ for all $g \in N_S(P,Q)$.
\end{Prop}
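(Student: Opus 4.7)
The plan is to construct $\vp^\bullet$ morphism by morphism, using part \ref{lem-factor-in-L:epi} of Lemma \ref{lem-factor-in-L} as the essential tool. Given $\vp \in \morl{P}{Q}$ with $P,Q$ $\calf$-centric, I would set $\gamma \defeq \iota_Q^{Q^\bullet} \circ \vp \in \morl{P}{Q^\bullet}$, and note that its image in $\calf$ is $[\gamma] = \incl_Q^{Q^\bullet} \circ [\vp] = ([\vp])^\bullet \circ \incl_P^{P^\bullet}$ by Proposition \ref{bullet props}\ref{bullet props:functor}. Applying Lemma \ref{lem-factor-in-L}\ref{lem-factor-in-L:epi} to the composition $P \xto{\incl_P^{P^\bullet}} P^\bullet \xto{([\vp])^\bullet} Q^\bullet$ in $\calf^{c\bullet}$ with the lift $\alpha = \iota_P^{P^\bullet}$ of $\incl_P^{P^\bullet}$ and with the lift $\gamma$ above, one gets a unique $\vp^\bullet \in \morl{P^\bullet}{Q^\bullet}$ satisfying the required identity $\vp^\bullet \circ \iota_P^{P^\bullet} = \iota_Q^{Q^\bullet} \circ \vp$. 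This simultaneously yields the uniqueness of each $\vp^\bullet$, and hence the uniqueness of the whole functor.

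Functoriality would then be a quick consequence of that same uniqueness statement. For composable $\vp \in \morl{P}{Q}$ and $\psi \in \morl{Q}{R}$, one computes
\[
(\psi^\bullet \circ \vp^\bullet) \circ \iota_P^{P^\bullet} = \psi^\bullet \circ \iota_Q^{Q^\bullet} \circ \vp = \iota_R^{R^\bullet} \circ \psi \circ \vp,
\]
so $\psi^\bullet \circ \vp^\bullet$ satisfies the defining equation for $(\psi \circ \vp)^\bullet$; the uniqueness part of Lemma \ref{lem-factor-in-L}\ref{lem-factor-in-L:epi} then forces $(\psi \circ \vp)^\bullet = \psi^\bullet \circ \vp^\bullet$. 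The identity on $P$ lifts to the identity on $P^\bullet$ for the same reason. Applying $\pi$ to the defining relation gives $[\vp^\bullet] \circ \incl_P^{P^\bullet} = \incl_Q^{Q^\bullet} \circ [\vp]$, which matches the defining equation of $([\vp])^\bullet$ in Proposition \ref{bullet props}\ref{bullet props:functor}; unique extension in $\calf^\bullet$ then gives $[\vp^\bullet] = ([\vp])^\bullet$, proving that the functor lifts $(-)^\bullet$ on $\calf$.

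For the final assertion $(\widehat{g})^\bullet = \widehat{g}$, I would first need to know that $\widehat{g} \in \morl{P^\bullet}{Q^\bullet}$ even makes sense, i.e.\ that $g \in N_S(P^\bullet, Q^\bullet)$ whenever $g \in N_S(P,Q)$. This follows by applying the functor $(-)^\bullet$ on $\calf$ to $c_g \in \homf(P,Q)$: its unique extension $(c_g)^\bullet \in \homf(P^\bullet, Q^\bullet)$ must agree with the restriction of $c_g$ to $P^\bullet$ (both restrict to $c_g$ on $P$), so $c_g(P^\bullet) \leq Q^\bullet$. With this in hand, the composition rule of Proposition \ref{prop-extend-hats-in-L} gives
\[
\widehat{g} \circ \iota_P^{P^\bullet} \;=\; \widehat{g \cdot 1} \;=\; \widehat{1 \cdot g} \;=\; \iota_Q^{Q^\bullet} \circ \widehat{g},
\]
so $\widehat{g} \in \morl{P^\bullet}{Q^\bullet}$ satisfies the defining equation of $(\widehat{g})^\bullet$, and the uniqueness established above forces $(\widehat{g})^\bullet = \widehat{g}$.

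The main obstacle, such as it is, lies in setting up the correct application of Lemma \ref{lem-factor-in-L}\ref{lem-factor-in-L:epi}, in particular choosing the right triangle in $\calf^{c\bullet}$ and the right $\gamma$; once this is done, everything else — functoriality, the lifting property, the identification of $(\widehat{g})^\bullet$, and the uniqueness of the functor — reduces to uniqueness statements already supplied by Lemma \ref{lem-factor-in-L} and Proposition \ref{bullet props}. The only genuinely non-formal input is the stability of $\bullet$ under conjugation in $S$, which is immediate from the functoriality of $(-)^\bullet$ on $\calf$ applied to inner automorphisms.
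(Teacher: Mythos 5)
Your construction of $\vp^\bullet$, the proof of functoriality, the lifting property, and the uniqueness of the functor follow the paper's argument essentially verbatim: the paper likewise produces $\vp^\bullet$ by applying Lemma \ref{lem-factor-in-L}\ref{lem-factor-in-L:epi} to the relation $\incl_Q^{Q^\bullet}\circ[\vp]=[\vp]^\bullet\circ\incl_P^{P^\bullet}$ and then extracts everything else from the uniqueness of the solution of $\vp^\bullet\circ\iota_P^{P^\bullet}=\iota_Q^{Q^\bullet}\circ\vp$. For the final claim $(\widehat g)^\bullet=\widehat g$ you verify the defining equation directly, using $\widehat g\circ\iota_P^{P^\bullet}=\widehat{g}=\iota_Q^{Q^\bullet}\circ\widehat g$ and uniqueness, whereas the paper first observes that $(-)^\bullet$ is the identity on $\Aut_\call(S)$, applies the functor to $\iota_Q^S\circ\widehat g=\widehat g\circ\iota_P^S$, and cancels the monomorphism $\iota_{Q^\bullet}^S$; both routes are fine and of equal length. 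The one step you should tighten is your justification that $g\in N_S(P^\bullet,Q^\bullet)$: as phrased (the unique extension $(c_g)^\bullet$ must agree with the restriction of $c_g$ to $P^\bullet$ since both restrict to $c_g$ on $P$) you invoke a uniqueness statement whose applicability presupposes that $c_g(P^\bullet)\leq Q^\bullet$, which is exactly what is being proved. This is easily repaired: either compose into $S$ and use that restriction $\homf(P^\bullet,S)\to\homf(P,S)$ is a bijection (the adjunction in Proposition \ref{bullet props}\ref{bullet props:functor}, noting $S^\bullet=S$), or, more directly, apply Proposition \ref{fusion-preserving-respect-bullet} to the fusion preserving automorphism $c_g$ of $S$, giving $gP^\bullet g^{-1}=(gPg^{-1})^\bullet\leq Q^\bullet$ at once --- which is the precise form of the remark you make at the end, and is also the fact the paper itself leaves implicit.
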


\begin{proof}
On objects define $\call \xto{(-)^\bullet} \call^\bullet$ by $P \rMapsto P^\bullet$.
Let $\vp \in \morl{P}{Q}$ be any morphism.
By Proposition \ref{bullet props}\ref{bullet props:functor}, $\incl_Q^{Q^\bullet} \circ [\vp]=[\vp]^\bullet \circ \incl_P^{P^\bullet} $, and Lemma \ref{lem-factor-in-L}\ref{lem-factor-in-L:epi} implies that there exists a unique morphism $\vp^\bullet \in \morl{P^\bullet}{Q^\bullet}$ such that
\begin{equation}\label{prop-lift-bullet-to-l:eq-def-bullet}
\vp^\bullet \circ \iota_P^{P^\bullet} = \iota_Q^{Q^\bullet} \circ \vp.
\end{equation}
Define $(-)^\bullet$ on $\vp$ in this way.
The fact that $[\vp^\bullet]=[\vp]^\bullet$ follows from the uniqueness statement in Proposition \ref{bullet props}\ref{bullet props:functor} and shows in particular that $(-)^\bullet$, as defined on $\call$,  lifts $(-)^\bullet \colon \F \to \F^\bullet$, provided we show it is a functor.
The uniqueness of $\vp^\bullet$ solving \eqref{prop-lift-bullet-to-l:eq-def-bullet} easily implies that $(1_P)^\bullet=1_{P^\bullet}$, and that $(-)^\bullet$ respects compositions.
This shows that  $(-)^\bullet$ is a functor on $\L$. The uniqueness of $\vp^\bullet$ in Equation ({prop-lift-bullet-to-l:eq-def-bullet}) also shows that $(-)^\bullet$ with these properties is unique.

To show that $(\widehat{g})^\bullet=\widehat{g}$ for all $g \in N_S(P,Q)$, first note that by Equation (\ref{prop-lift-bullet-to-l:eq-def-bullet}) $(-)^\bullet$ induces the identity on $\Aut_\call(S)$. Then apply $(-)^\bullet$ to the equation $\iota_Q^S\circ\widehat{g} = \widehat{g}\circ\iota_P^S$, where $\widehat{g}$ on the right hand side is the corresponding element of $\Aut_\call(S)$, and use the fact that $\iota_{Q^\bullet}^S$ is a monomorphism
by Corollary \ref{cor-morphisms-of-L-are-mono-and-epi}.
\end{proof}

\begin{Prop}\label{fusion-preserving-respect-bullet}
If $\phi \colon S \to S$ is a fusion preserving automorphism then $\phi(P^\bullet)=\phi(P)^\bullet$ for any $P \leq S$.
\end{Prop}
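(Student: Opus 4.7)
The plan rests on the universal property of $P^\bullet$. By Proposition \ref{bullet props}(ii), $(-)^\bullet\colon \calf \to \calf^\bullet$ is left adjoint to the inclusion $\calf^\bullet \subseteq \calf$ with unit of adjunction the inclusion $P \le P^\bullet$; consequently $P^\bullet$ is the smallest member of $\calh^\bullet(\calf)$ containing $P$. It therefore suffices to show that $\phi$ permutes the set $\calh^\bullet(\calf)$: granted this, $\phi(P^\bullet) \in \calh^\bullet(\calf)$ and contains $\phi(P)$, so minimality forces $\phi(P)^\bullet \le \phi(P^\bullet)$; the reverse inclusion follows by applying the same argument to $\phi^{-1}$, which is fusion preserving since $\phi_*^{-1}$ is the inverse of $\phi_*$ on $\calf$.

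The remaining and central task, to prove $\phi(\calh^\bullet(\calf)) = \calh^\bullet(\calf)$, is the main obstacle and requires unpacking the explicit construction of $(-)^\bullet$ from \cite{BLO3}. That construction is built entirely from data intrinsic to $\calf$: the identity component $T = S_0$, the Weyl group $W = \Aut_\calf(T)$ acting on $T$, and an integer $m$, chosen large enough in terms of $\calf$, governing the $W$-action on the $p^m$-torsion of $T$ and the way the component group $S/T$ interacts with $T$. The torus $T$ is characteristic in $S$ and is therefore preserved by $\phi$; the Weyl group $W$ is canonically attached to $\calf$ and is respected by the isotypical automorphism $\phi_*$; and the integer $m$ depends only on $\calf$. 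Consequently each ingredient used to build $P^\bullet$ is transported by $\phi$ to the corresponding ingredient for $\phi(P)^\bullet$.

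Making this precise is a step-by-step verification against the \cite{BLO3} definition, checking that the operations involved (intersection with $T$, taking $W$-orbits on the torsion subgroup, extending by the appropriate $W$-invariant subgroup of $T$, and so on) each commute with $\phi$ in the required sense. No new conceptual difficulty arises, and once this equivariance is established one concludes that $\phi(P^\bullet) \in \calh^\bullet(\calf)$, whence $\phi(P^\bullet) = \phi(P)^\bullet$ by the minimality argument of the first paragraph.
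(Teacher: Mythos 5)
Your reduction in the first paragraph is sound as far as it goes: by Proposition \ref{bullet props}\ref{bullet props:functor} (idempotence plus preservation of inclusions), $P^\bullet$ is indeed the smallest member of $\calh^\bullet(\calf)$ containing $P$, and if one knew that $\phi$ permutes $\calh^\bullet(\calf)$ the two inclusions would follow as you say (using that $\phi^{-1}$ is again fusion preserving). The genuine gap is that the ``remaining and central task'' is never carried out: you assert that a step-by-step check of the \cite{BLO3} construction presents ``no new conceptual difficulty,'' but that check \emph{is} the entire content of the proposition, and your detour through minimality does not reduce it, since the only way to see that $\phi$ carries $\calh^\bullet(\calf)$ into itself is essentially to prove $\phi(Q^\bullet)=\phi(Q)^\bullet$ for every $Q$ --- i.e.\ the statement you set out to prove. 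Moreover your recollection of the construction (``intersection with $T$, taking $W$-orbits on the torsion subgroup, \dots'') is not accurate, which matters because the one place where fusion preservation is actually used is invisible at that level of vagueness.

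For comparison, the paper's proof is exactly the short verification you defer: by \cite[Definition 3.1]{BLO3}, $P^\bullet = P\cdot I(P^{[m]})$ where $P^{[m]}=\{g^{p^m}\st g\in P\}\le S_0$ with $p^m=|S/S_0|$, and $I(Q)=S_0^{C_W(Q)}$ for $Q\le S_0$, $W=\Aut_\calf(S_0)$. Then $\phi(P^{[m]})=\phi(P)^{[m]}$ is immediate, and
\[
I(\phi(Q))=S_0^{C_W(\phi(Q))}=S_0^{\phi C_W(Q)\phi^{-1}}=\phi\bigl(S_0^{C_W(Q)}\bigr)=\phi(I(Q)),
\]
where the middle equality is the crucial point: it needs $\phi|_{S_0}$ to normalise $W$, i.e.\ $\phi\circ w\circ\phi^{-1}\in\Aut_\calf(S_0)$ for every $w\in W$, which holds precisely because $\phi$ is fusion preserving and $S_0$ is characteristic. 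With these two identities, $\phi(P)^\bullet=\phi(P)\cdot I(\phi(P)^{[m]})=\phi\bigl(P\cdot I(P^{[m]})\bigr)=\phi(P^\bullet)$ directly, with no need for the minimality argument. Until you actually perform this (or an equivalent) computation and exhibit where fusion preservation enters, your proposal is an outline rather than a proof.
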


\begin{proof}
Recall the definition of $P^\bullet$ \cite[Definition 3.1]{BLO3},  $P^\bullet \defeq P\cdot I(P^{[m]})$, where $P^{[m]} = \{g^{p^m}\,|\,g\in P\}\le S_0$, with $p^m=|S/S_0|$, and for any $Q\le S_0$, $I(Q)\defeq S_0^{C_W(Q)}$ is again a subgroup of $S_0$. Clearly $\phi(P^{[m]})=\phi(P)^{[m]}$ for any $P \leq S$, and for $Q\le S_0$  \[I(\phi(Q))=S_0^{C_W(\phi(Q))} = S_0^{\phi C_W(Q)) \phi^{-1}}=\phi(S_0^{C_W(Q)})=\phi(I(Q)).\] 
Therefore, $\phi(P)^\bullet = \phi(P)\cdot I(\phi(P^{[m]})) = \phi(P \cdot I(P^{[m]}))=\phi(P^\bullet)$.
\end{proof}

\begin{Prop}\label{extend-fusion-preserving-from-lbullet-to-l}
Fix a $p$-local compact group $(S,\calf,\call)$ and let $\phi \colon S \to S$ be a fusion preserving isomorphism.
Let $\Phi' \colon \call^\bullet \to \call^\bullet$ be a functor which covers $\phi$.
Then $\Phi'$ extends to a unique functor $\Phi \colon \call \to \call$ which covers $\phi$.
\end{Prop}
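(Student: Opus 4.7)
The plan is to build $\Phi$ by using the object–level formula $\Phi(P)=\phi(P)$, and then defining $\Phi$ on a morphism $\vp\in\morl{P}{Q}$ by first passing to its ``$\bullet$-extension'' $\vp^\bullet\in\morl{P^\bullet}{Q^\bullet}$ provided by Proposition \ref{prop-lift-bullet-to-l}, applying the given functor $\Phi'$, and then restricting back along the distinguished inclusions. Explicitly, $\Phi'(\vp^\bullet)$ lives in $\morl{\phi(P^\bullet)}{\phi(Q^\bullet)}=\morl{\phi(P)^\bullet}{\phi(Q)^\bullet}$ by Proposition \ref{fusion-preserving-respect-bullet}, and the candidate morphism $\Phi(\vp)\in\morl{\phi(P)}{\phi(Q)}$ is to be the unique lift making the square
\[
\xymatrix{
\phi(P) \ar[rr]^{\Phi(\vp)} \ar[d]_{\iota_{\phi(P)}^{\phi(P)^\bullet}} & & \phi(Q) \ar[d]^{\iota_{\phi(Q)}^{\phi(Q)^\bullet}} \\
\phi(P)^\bullet \ar[rr]^{\Phi'(\vp^\bullet)} & & \phi(Q)^\bullet
}
\]
commute. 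Existence and uniqueness of this lift will come from Lemma \ref{lem-factor-in-L}\ref{lem-factor-in-L:mon} applied to $\be=\iota_{\phi(Q)}^{\phi(Q)^\bullet}$ and $\ga=\Phi'(\vp^\bullet)\circ\iota_{\phi(P)}^{\phi(P)^\bullet}$, once one checks that $[\ga]=\incl_{\phi(Q)}^{\phi(Q)^\bullet}\circ\phi_*([\vp])$. This verification is a direct computation: $\pi\circ\Phi'=\phi_*\circ\pi$ gives $[\Phi'(\vp^\bullet)]=\phi_*([\vp]^\bullet)$, and $[\vp]^\bullet$ extends $[\vp]$ so that $[\vp]^\bullet\circ\incl_P^{P^\bullet}=\incl_Q^{Q^\bullet}\circ[\vp]$; applying $\phi_*$ yields precisely the required factorisation.

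Next I would verify the three things that make this construction work: functoriality of $\Phi$, the fact that $\Phi$ covers $\phi$, and that $\Phi$ extends $\Phi'$. For functoriality, given composable $\vp,\psi$ in $\call$, Proposition \ref{prop-lift-bullet-to-l} gives $(\psi\circ\vp)^\bullet=\psi^\bullet\circ\vp^\bullet$; both $\Phi(\psi\circ\vp)$ and $\Phi(\psi)\circ\Phi(\vp)$ then satisfy the same defining equation after post-composition with the monomorphism $\iota_{\phi(R)}^{\phi(R)^\bullet}$, and Corollary \ref{cor-morphisms-of-L-are-mono-and-epi} lets us cancel it. Preservation of identities is similar. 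For the covering conditions, axiom (i) in Definition \ref{def-functor-covers-F} is built in since $[\Phi(\vp)]=\phi_*([\vp])$; axiom (ii) follows from $(\widehat{g})^\bullet=\widehat{g}$ (Proposition \ref{prop-lift-bullet-to-l}) together with the composition formula for the $\widehat{\cdot}$-morphisms (Proposition \ref{prop-extend-hats-in-L}), which shows $\iota_{\phi(Q)}^{\phi(Q)^\bullet}\circ\widehat{\phi(g)}=\widehat{\phi(g)}\circ\iota_{\phi(P)}^{\phi(P)^\bullet}$, so that $\widehat{\phi(g)}$ is the unique lift that defines $\Phi(\widehat{g})$. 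Finally, when $P,Q\in\call^\bullet$ we have $P=P^\bullet$ and $\vp=\vp^\bullet$, the two inclusions are identities, and the defining equation gives $\Phi(\vp)=\Phi'(\vp)$.

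For uniqueness, suppose $\widetilde{\Phi}\colon\call\to\call$ is any functor extending $\Phi'$ and covering $\phi$. For $\vp\in\morl{P}{Q}$, apply $\widetilde{\Phi}$ to the equation $\iota_Q^{Q^\bullet}\circ\vp=\vp^\bullet\circ\iota_P^{P^\bullet}$ of Proposition \ref{prop-lift-bullet-to-l}. Since $\widetilde{\Phi}$ covers $\phi$ we have $\widetilde{\Phi}(\iota_P^{P^\bullet})=\widetilde{\Phi}(\widehat{e})=\widehat{\phi(e)}=\iota_{\phi(P)}^{\phi(P)^\bullet}$, and similarly for $Q$; and since $\vp^\bullet$ is a morphism in $\call^\bullet$ we have $\widetilde{\Phi}(\vp^\bullet)=\Phi'(\vp^\bullet)$. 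The result is exactly the defining equation for $\Phi(\vp)$, so $\widetilde{\Phi}(\vp)=\Phi(\vp)$ by the uniqueness half of Lemma \ref{lem-factor-in-L}\ref{lem-factor-in-L:mon}.

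The only mildly subtle step is verifying that $[\Phi'(\vp^\bullet)\circ\iota_{\phi(P)}^{\phi(P)^\bullet}]$ actually factors through $\incl_{\phi(Q)}^{\phi(Q)^\bullet}$, so that Lemma \ref{lem-factor-in-L}\ref{lem-factor-in-L:mon} applies; but this is the same bookkeeping described above, combining Proposition \ref{prop-lift-bullet-to-l}, Proposition \ref{fusion-preserving-respect-bullet}, and the fact that $\phi_*$ intertwines $[\vp]$ and $[\vp]^\bullet$. Everything else then follows formally from the monomorphism property of morphisms in $\call$ established in Corollary \ref{cor-morphisms-of-L-are-mono-and-epi}.
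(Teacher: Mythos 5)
Your proof is correct and takes essentially the same route as the paper: you define $\Phi(\vp)$ as the unique lift of $\Phi'(\vp^\bullet)$ along the distinguished inclusions via Lemma \ref{lem-factor-in-L}\ref{lem-factor-in-L:mon}, after checking the factorisation $[\Phi'(\vp^\bullet)]\circ\incl_{\phi(P)}^{\phi(P)^\bullet}=\incl_{\phi(Q)}^{\phi(Q)^\bullet}\circ\phi_*([\vp])$, and then verify functoriality, the covering conditions, and the extension property just as the paper does. The only difference is that you spell out the uniqueness of the extension explicitly (by applying a putative extension to $\iota_Q^{Q^\bullet}\circ\vp=\vp^\bullet\circ\iota_P^{P^\bullet}$), a detail the paper leaves implicit.
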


\begin{proof}
Define $\Phi$ on objects by $\Phi(P)=\phi(P)$. To define $\Phi$ on morphisms, observe first that
\[[\Phi'(\vp^\bullet)] =\phi_*([\vp^\bullet])=\phi_*([\vp]^\bullet),\quad\text{and that}\quad [\vp]^\bullet \circ \incl_P^{P^\bullet} = \incl_Q^{Q^\bullet} \circ [\vp].\]
Applying $\phi_*$ to the second equation, and using the first, we obtain \[[\Phi'(\vp^\bullet)] \circ \incl_{\phi(P)}^{\phi(P^\bullet)}=\incl_{\phi(Q)}^{\phi(Q^\bullet)} \circ \phi_*([\vp]).\]  Lemma \ref{lem-factor-in-L}\ref{lem-factor-in-L:mon} now implies that there is a unique morphism $\Phi(\vp) \in \morl{\phi(P)}{\phi(Q)}$ such that 
\[
\Phi'(\vp^\bullet) \circ \iota_{\phi(P)}^{\phi(P^\bullet)} = \iota_{\phi(Q)}^{\phi(Q^\bullet)} \circ \Phi(\vp) \qquad  \text{and} \qquad [\Phi(\vp)]=\phi_*([\vp]).
\]
Functoriality of $\Phi$ follows from the uniqueness statement of Lemma \ref{lem-factor-in-L}\ref{lem-factor-in-L:mon}.
By construction, and since $(-)^\bullet$ is the identity on $\call^\bullet$, the functor $\Phi$ extends $\Phi'$.
From Remark \ref{remark-hats} and the last statement of Proposition \ref{prop-lift-bullet-to-l} one deduces that $\Phi(\widehat{g})=\widehat{\phi(g)}$ for all $g \in N_S(P,Q)$, and so $\Phi$  covers $\phi$ as well.
\end{proof}

\newsect{Adams automorphisms of discrete $p$-toral groups}
\label{sec:adams-automorphisms}

In this section we define a certain class of automorphisms of discrete $p$-toral groups, which we call \emph{Adams automorphisms}. These automorphisms are the starting point of our study of unstable Adams operations on $p$-local compact groups. 

Throughout this section, for any discrete group $\Gamma$,  $\Z[\Gamma]$-modules will be referred to as $\Gamma$-modules.
For a group $G$ which is an extension $0 \to M \to G \to \Ga \to 1$, of a group $\Ga$ by a $\Ga$-module $M$, let  $[G] \in H^2(\Ga,M)$ denote the corresponding extension class (see e.g. \cite[Chapter IV.3]{Br}).

\begin{Defi}\label{Def-aut-extensions}
Let $G$ be a fixed extension of a group $\Ga$ by a $\Ga$-module $M$.
For $\la \in \Aut(M)$ and $\vp \in\ \Aut(\Ga)$, let $\Aut(G;\la,\vp)$ denote the set of all automorphisms $\psi \in \Aut(G)$ which render the following diagram commutative.
\begin{equation}\label{ext-diag}
\xymatrix{
0 \ar[r] &
M \ar[d]^{\la} \ar[r] &
G \ar[d]^{\psi} \ar[r] &
\Gamma \ar[r] \ar[d]^{\vp} &
1 
\\
0 \ar[r] &
M \ar[r] &
G \ar[r] &
\Gamma \ar[r]  &
1
}
\end{equation}

For any group $G$ and subgroups $H,K\le G$, let $\Aut_H(K)\le\Aut(K)$ denote the subgroup of all automorphisms of $K$ which are induced by conjugation by elements of $H$. In particular, if $G=H$ one has $\Aut_G(K) = N_G(K)/C_G(K)$. On the other hand, if $G=K$, then $\Aut_H(G)$ is the image of $H$ in $\Inn(G)$. For an extension as above, notice that $\Aut_M(G) \leq \Aut(G;1_M,1_\Ga)$.
\end{Defi}

Let $M$ be a $\Ga$-module, and let $\varphi\in\Aut(\Ga)$.  Let $\vp^*M$ denote the $\Ga$-module $M$ where the action of $\Ga$ is twisted by $\vp$, namely  $x\in\Ga$ and $m\in M$, $x\cdot m \defeq \vp(x)\cdot m$.  Thus one has an induced homomorphism
 $H^*(\Gamma,M) \xto{\vp^*} H^*(\Gamma,\vp^*M)$.
If $\lambda\in\Aut(M)$ is an automorphism, we say that $\lambda$ is compatible with $\vp$  if  for each $x\in\Ga$ and $m\in M$, $\lambda(x\cdot m) = \vp(x)\cdot\la(m)$. In that case one has an induced homomorphism   $\lambda_* \colon H^*(\Gamma,M) \to H^*(\Gamma, \vp^*M)$.

\begin{Lem}\label{Adams-autos-class}
Let $G$ be an extension of a group $\Gamma$ by a $\Gamma$ module $M$. Fix $\vp \in \Aut(\Gamma)$ and $\lambda \in \Aut(M)$.
Then
\begin{enumerate}
\item 
\label{Adams-autos-class:part1}
$\Aut(G;\la,\vp)$ is not empty if and only if $\lambda$ is compatible with $\vp$, and $\vp^*([G])=\lambda_*([G])$.

\item 
There is an isomorphism $H^1(\Gamma,M)\cong\Aut(G;1_M,1_\Ga)/\Aut_M(G)$, and  so $H^1(\Gamma,M)$ acts freely and transitively on $\Aut(G;\la,\vp)/\Aut_M(G)$.
\label{Adams-autos-class:part2}

\item 
Assume the action of $\Ga$ on $M$ is faithful (i.e., $C_G(M)=M$), and that $\lambda$ is central in $\Aut(M)$.
Then a necessary condition for $\Aut(G;\la,\vp)$ to be nonempty is $\vp=1_{\Ga}$.
\label{Adams-autos-class:part3}

\end{enumerate}
\end{Lem}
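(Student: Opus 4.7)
The plan is to base everything on cocycle calculations, so I would fix a normalised set-theoretic section $s\colon\Ga\to G$ with $s(1)=1$ whose $2$-cocycle $f(x,y)=s(x)s(y)s(xy)^{-1}\in M$ represents the class $[G]\in H^2(\Ga,M)$. Every element of $G$ then writes uniquely as $m\cdot s(x)$, and the group law is encoded by $f$ and the $\Ga$-action on $M$. Any automorphism $\psi$ preserving $M$ set-wise is determined by its restriction $\la:=\psi|_M$ together with the function $\tau\colon\Ga\to M$ characterised by $\psi(s(x))=\tau(x)\cdot s(\vp(x))$, where $\vp$ is the induced automorphism of $\Ga$.

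For part (i), the ``only if'' direction will follow by applying $\psi$ to the conjugation identity $s(x)\cdot m\cdot s(x)^{-1}=x\cdot m$: the left hand side becomes $\vp(x)\cdot\la(m)$ while the right hand side is $\la(x\cdot m)$, yielding compatibility. Next, comparing $\psi(s(x))\psi(s(y))$ with $\psi(s(x)s(y))=\la(f(x,y))\cdot\psi(s(xy))$ produces the identity $\la(f(x,y))-f(\vp(x),\vp(y))=\tau(x)+\vp(x)\cdot\tau(y)-\tau(xy)$, which is precisely $\la_*f-\vp^*f=\delta\tau$ in $C^2(\Ga,\vp^*M)$ and proves $\la_*[G]=\vp^*[G]$. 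For the converse, assuming compatibility and the equality of classes, one can choose $\tau$ realising the required coboundary and \emph{define} $\psi(m\cdot s(x))\defeq\la(m)\cdot\tau(x)\cdot s(\vp(x))$; the same cocycle identity makes $\psi$ a homomorphism, and the analogous construction for $(\la^{-1},\vp^{-1})$ provides a two-sided inverse, so $\psi\in\Aut(G;\la,\vp)$.

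For part (ii), specialising the correspondence $\psi\leftrightarrow\tau$ above to $\la=1_M$ and $\vp=1_\Ga$ makes the identity collapse to $\delta\tau=0$, yielding a bijection $\Aut(G;1_M,1_\Ga)\cong Z^1(\Ga,M)$. Inner automorphisms by elements of $M$ fix $M$ pointwise and act trivially on $\Ga$, so $\Aut_M(G)\subseteq\Aut(G;1_M,1_\Ga)$, and a direct calculation shows $\Aut_M(G)$ corresponds exactly to $B^1(\Ga,M)$ under the bijection. Passing to quotients then gives $\Aut(G;1_M,1_\Ga)/\Aut_M(G)\cong H^1(\Ga,M)$. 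Whenever $\Aut(G;\la,\vp)$ is non-empty, $\Aut(G;1_M,1_\Ga)$ acts on it freely and transitively by right composition, since $\psi_1^{-1}\psi_2\in\Aut(G;1_M,1_\Ga)$ for any $\psi_1,\psi_2\in\Aut(G;\la,\vp)$, and passing to the quotient by $\Aut_M(G)$ yields the claimed free transitive action of $H^1(\Ga,M)$.

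Part (iii) will be the shortest. The $\Ga$-action on $M$ is a homomorphism $\rho\colon\Ga\to\Aut(M)$, and the faithfulness hypothesis $C_G(M)=M$ is exactly the statement that $\rho$ is injective. Compatibility rewrites as $\rho(\vp(x))=\la\,\rho(x)\,\la^{-1}$, which collapses to $\rho(\vp(x))=\rho(x)$ once $\la$ is central in $\Aut(M)$, and injectivity of $\rho$ then forces $\vp=1_\Ga$. The main obstacle will be in part (i): keeping track of the twisted coefficient module $\vp^*M$ and verifying that the cocycles $\la\circ f$ and $f\circ(\vp\times\vp)$ naturally live in $Z^2(\Ga,\vp^*M)$ once compatibility is imposed, so that the equation $\la_*[G]=\vp^*[G]$ acquires its intended meaning. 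Once that bookkeeping is in place, the rest of the argument is routine.
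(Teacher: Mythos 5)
Your proposal is correct, but it takes a more self-contained route than the paper. For parts (i) and (ii) the paper simply outsources the work to Beyl--Tappe: the classes $\vp^*([G])$ and $\la_*([G])$ are identified with the ``backward'' and ``forward'' induced extensions, and their Theorem 1.10 gives both the non-emptiness criterion and the isomorphism $\Aut(G;1_M,1_\Ga)\cong\Der(\Ga,M)$ carrying $\Aut_M(G)$ to $\PDer(\Ga,M)$, after which $H^1=\Der/\PDer$ is quoted from Weibel. You instead redo this by hand: a normalised section $s$, the factor set $f$, the correspondence $\psi\leftrightarrow(\la,\tau)$, and the identity $\la_*f-\vp^*f=\delta\tau$ in $C^2(\Ga,\vp^*M)$; your $\tau$'s are exactly the derivations of the cited theorem, with $\Aut_M(G)$ landing on the principal ones, so the two arguments carry the same content. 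What your version buys is independence from the reference and an explicit handle on the twisted coefficients $\vp^*M$ (the bookkeeping you rightly flag); what it costs is the routine verifications you gesture at: that the extension's conjugation action on $M$ agrees with the given module structure (automatic since $M$ is abelian), that the constructed $\psi$ is bijective (either choose $\tau'$ for $(\la^{-1},\vp^{-1})$ compatibly or just invoke the five lemma on the ladder of extensions), and that the action of $H^1$ descends to the quotient by $\Aut_M(G)$ (clear, since automorphisms in $\Aut(G;1_M,1_\Ga)$ fix $M$ pointwise and hence centralise $\Aut_M(G)$). Part (iii) is verbatim the paper's argument.
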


\begin{proof}
\ref{Adams-autos-class:part1} 
The classes $\vp^*([G])$ and $\la_*([G])$ correspond to the extensions described in \cite[Definitions 1.3 and 1.5]{BT}, namely the ``backward'' and ``forward'' induced extensions.
The result follows from \cite[Theorem 1.10]{BT}.

\ref{Adams-autos-class:part2} 
Clearly $\Aut(G;1_M;1_\Gamma)$ acts freely and transitively on $\Aut(G;\la,\vp)$ via composition.
By \cite[Theorem 1.10]{BT} there is an isomorphism $\Aut(G;1_M,1_\Ga) \cong \Der(\Ga,M)$ which maps $\Aut_M(G)$ to $\PDer(\Ga,M)$.
Finally, $H^1(\Ga,M) \cong \Der(\Ga,M)/\PDer(\Ga,M)$, see e.g. \cite[Theorem 6.4.5]{We}.

\ref{Adams-autos-class:part3}
By point \ref{Adams-autos-class:part1},  $\Aut(G;\la,\vp)\neq\emptyset$ implies that $\lambda$ is compatible with $\varphi$, and since $\lambda$ is assume to be a central automorphism of $M$, it commutes with the action of $\Gamma.$ Thus, for each $x\in \Ga$ and $m\in M$, 
\[x\cdot\lambda(m) = \lambda(x\cdot m) = \vp(x)\cdot\la(m).\] 
Since the action is assumed faithful, this implies at once that $\vp(x)=x$ for all $x\in \Gamma$.
\end{proof}

In the remainder of this section $S$ always denotes a discrete $p$-toral group with maximal torus $S_0$ and group of components $\Ga$ (Definition \ref{dpt-groups}).
Thus, $S$ is an extension of $\Ga$ by the $\Ga$-module $S_0$.
Note that the centre of $\Aut(S_0) \cong \GL_r(\Z_p)$, where $r=\rk(S_0)$, is isomorphic to the group of the $p$-adic units $\Z_p^\times$ which acts on $S_0$ by power maps, $t \mapsto t^\zeta$, where $\zeta \in \Z_p^\times$ is a fixed unit, and  $t \in S_0$.
Also note that $S_0$ is a characteristic subgroup of $S$, so we obtain homomorphisms $\Aut(S) \xto{\res} \Aut(S_0)$ and $\Aut(S) \xto{q} \Aut(\Ga)$.

\begin{Defi}\label{def-adams-automorphisms}
An \emph{Adams automorphism} on $S$ of degree $\zeta$, where $\zeta \in \Z_p^\times$, is an automorphism of $S$ whose restriction to $S_0$ is the $\zeta$-power automorphism. Clearly composition of Adams automorphisms of $S$ is an Adams automorphism, where the degree is given by the product of the two individual degrees. Thus the collection of all Adams automorphisms of $S$ forms a group which we denote by $\adams(S)$. Let $\adams_\zeta(S)$ denote the subset of all Adams automorphisms of degree $\zeta$. 

The obvious map $\deg \colon \adams(S) \to \Z_p^\times$, which associates with an Adams automorphism its degree, is a group homomorphism. It is the restriction of the natural map $\Aut(S) \xto{\res} \Aut(S_0)$ to $\adams(S)$.

An Adams automorphism is called \emph{normal} if it induces the identity of $\Gamma$.
Let $\adams(S;1_\Ga)$ denote the subgroup of the normal Adams automorphisms and $\adams_1(S;1_\Ga)$ the subgroup of those of degree $1$.
\end{Defi}

\begin{Rmk}\label{remarks-adams-aut}
For any discrete $p$-toral group $S$, the following hold.
\begin{enumerate}
\item
$\adams_1(S)=\Ker(\deg)$, and $\adams(S;1_\Ga)=\Ker(\adams(S) \to \Aut(\Ga))$, so  both subgroups are normal in $\adams(S)$.
\item $\Aut_{S_0}(S) \nsg \adams_1(S;1_\Ga)$.
In fact $\Aut_{S_0}(S)\nsg\Aut(S)$, becuase it is the maximal torus, hence a characteristic subgroup of the $p$-toral group $\Inn(S)=S/Z(S)$, which in turn is normal in $\Aut(S)$.

\item
\label{remarks-adams-aut:union}
$\adams(S)$ is the union of $\Aut(S;\zeta,\ga)$, for all $\zeta \in \Z_p^\times$ and all $\ga \in \Aut(\Ga)$. Here, by abuse of notation, $\zeta$ means the $\zeta$ power map on $S_0$, 
\end{enumerate}
\end{Rmk}

We will almost always restrict attention to normal Adams automorphisms. The next lemma shows that this is in fact a reasonable restriction. 
For a discrete $p$-toral group $S$, the  maximal torus $S_0$ is  not generally self-centralising, but in most important cases it would be. For instance, if $\calf$ is a saturated fusion system over $S$, and one assumes that every $s\in S$ is $\calf$-conjugate to $S_0$ (which can be taken as a concept analogous to connectivity), then $S_0$ is automatically self-centralising.

\begin{Lem}\label{Lem-centric-torus=>normal}
If $S_0$ is self-centralising in $S$, then every Adams automorphism of $S$ is normal, i.e. $\adams(S)=\adams(S;1_\Ga)$.
\end{Lem}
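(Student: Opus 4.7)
The plan is to deduce the lemma directly from part \ref{Adams-autos-class:part3} of Lemma \ref{Adams-autos-class}, applied with $M = S_0$ and $\Gamma$ the group of components of $S$. The key observation to make is that the two hypotheses of that part are automatically satisfied in the present setting: the self-centralising assumption $C_S(S_0) = S_0$ translates exactly into the faithfulness of the $\Gamma$-action on $S_0$, and the $\zeta$-power map is central in $\Aut(S_0) \cong \GL_r(\Z_p)$ because it corresponds to the scalar matrix $\zeta \cdot I_r$, and scalars form the center of $\GL_r(\Z_p)$.

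First I would pick an arbitrary $\psi \in \adams(S)$ of degree $\zeta \in \Z_p^\times$. Let $\lambda = \psi|_{S_0}$, which by definition of an Adams automorphism is the $\zeta$-power map on $S_0$, and let $\vp = q(\psi) \in \Aut(\Gamma)$ be the induced automorphism on the group of components (recall $S_0$ is characteristic in $S$). By Remark \ref{remarks-adams-aut}\ref{remarks-adams-aut:union}, $\psi \in \Aut(S;\lambda,\vp)$, so in particular $\Aut(S;\lambda,\vp) \neq \emptyset$.

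Next I would verify the hypotheses of Lemma \ref{Adams-autos-class}\ref{Adams-autos-class:part3}. Faithfulness of the $\Gamma$-action on $S_0$ is equivalent to $C_S(S_0) = S_0$, which holds by assumption. Centrality of $\lambda$ in $\Aut(S_0)$ is the observation about scalars above. Applying the lemma then forces $\vp = 1_\Gamma$, that is, $\psi$ induces the identity on $\Gamma$, so $\psi \in \adams(S;1_\Gamma)$. Since $\psi$ was arbitrary this yields $\adams(S) \subseteq \adams(S;1_\Gamma)$, and the reverse inclusion is by definition.

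There is no real obstacle here; the lemma is an immediate corollary of the preceding structural result, and the only small point worth being careful about is the identification of the center of $\Aut(S_0)$ with the group of $p$-adic power maps, which is standard for $\GL_r(\Z_p)$.
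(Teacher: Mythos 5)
Your proof is correct and follows exactly the paper's route: the paper also deduces the lemma immediately from Lemma \ref{Adams-autos-class}\ref{Adams-autos-class:part3} together with Remark \ref{remarks-adams-aut}\ref{remarks-adams-aut:union}, with the self-centralising hypothesis supplying faithfulness of the $\Ga$-action on $S_0$ and the $\zeta$-power map being central in $\Aut(S_0)\cong\GL_r(\Z_p)$. Your write-up simply spells out these verifications, which the paper leaves implicit.
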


\begin{proof}
This is immedate from Lemma \ref{Adams-autos-class}\ref{Adams-autos-class:part3} and Remark \ref{remarks-adams-aut}\ref{remarks-adams-aut:union}.
\end{proof}

Our next goal is to examine which $p$-adic units $\zeta$ can appear as the degree of a normal Adams automorphisms. For a discrete $p$-toral group $S$, we call any map of sets, which is a right inverse to the projection $\pi\colon S\to\Ga$, a \emph{section} of $S$. Once a section of $S$ is fixed, any element $s\in S$ has a unique expression in the form $s=t\cdot\sigma(x)$, where $x=\pi(s)$ and $t=s\cdot\sigma(x)^{-1}$.

\begin{Lem}\label{Adams aut => section}
Let $\psi$ be a normal Adams automorphism of $S$ of degree $\zeta \neq 1$.
Then there exists a section $\si$ of $S$ which is fixed by $\psi$, namely $\psi \circ \si = \si$.
In particular $\psi(t \cdot \si(x)) = t^\zeta \cdot \si(x)$ for all $t \in S_0$ and $x \in \Ga$.
\end{Lem}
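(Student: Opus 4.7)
The plan is to correct an arbitrary set-theoretic section to a $\psi$-fixed one. First, fix any section $\si_0 \colon \Ga \to S$. Since $\psi$ is normal, for each $x \in \Ga$ the element $\psi(\si_0(x)) \cdot \si_0(x)^{-1}$ lies in $S_0$; denote it by $b(x)$. The goal is then to produce a map $c \colon \Ga \to S_0$ such that the adjusted section $\si(x) \defeq c(x) \cdot \si_0(x)$ is fixed by $\psi$. Since $c(x) \in S_0$ and $\psi$ restricts to the $\zeta$-power map on $S_0$, one computes
\[ \psi(c(x) \cdot \si_0(x)) = c(x)^\zeta \cdot b(x) \cdot \si_0(x), \]
so the condition $\psi(\si(x)) = \si(x)$ reduces, using the abelianness of $S_0$, to solving the equation $c(x)^{1-\zeta} = b(x)$ in $S_0$ for each $x$.

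The main step is therefore to show that the $(1-\zeta)$-power endomorphism of $S_0$ is surjective. Since $\zeta \neq 1$, one can write $1-\zeta = p^k u$ with $k \geq 0$ and $u \in \Z_p^\times$. The $u$-power map is an automorphism of $S_0$, while the $p^k$-power map is surjective because $S_0 \cong (\Z/p^\infty)^r$ is $p$-divisible. Composing, the $(1-\zeta)$-power endomorphism of $S_0$ is surjective, so a preimage $c(x) \in S_0$ of $b(x)$ exists for every $x$ (not uniquely, but only a set-theoretic section is required, so no coherent choice across $x \in \Ga$ is needed). Making any such choice for each $x$ produces the desired section $\si$, and the identity
\[ \psi(t \cdot \si(x)) = \psi(t) \cdot \psi(\si(x)) = t^\zeta \cdot \si(x) \]
is then immediate from $\psi$ being a homomorphism that acts as the $\zeta$-power on $S_0$. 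The only substantive ingredient is the $p$-divisibility of $S_0$, which is precisely what the hypothesis $\zeta \neq 1$ is used to exploit; without it, $1-\zeta$ could be zero and the $(1-\zeta)$-power map would collapse $S_0$ to a point.
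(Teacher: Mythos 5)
Your proof is correct and follows essentially the same route as the paper: the paper also solves $\psi(x)x^{-1}=t^{1-\zeta}$ coset by coset using the divisibility of $S_0$ and $\zeta\neq 1$, which is just your correction of an arbitrary section phrased pointwise. Your explicit factorisation $1-\zeta=p^k u$ merely spells out the surjectivity of the $(1-\zeta)$-power map that the paper asserts directly from divisibility.
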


\begin{proof} 
We need to find an element $y$ in any coset $S_0\cdot x$ which is fixed by $\psi$, namely $\psi(y)=y$.
Since $\psi$ is normal, $\psi(s)s^{-1} \in S_0$ for all $s\in S$.
Fix some $x\in S$. Since $S_0$ is divisible and $\zeta \neq 1$, there exists $t \in S_0$ such that $\psi(x)x^{-1} =t^{1-\zeta}$.
Set $y=t\cdot x$.
Then $y\in S_0\cdot x$, and $\psi(y)=\psi(tx)=\psi(t)\psi(x)=t^\zeta \cdot t^{1-\zeta} x = y$ as needed.
\end{proof}

\begin{Defi}\label{Def-p-adic-val-gamma}
The $p$-adic valuation of $z \in \Z_p^\times$, denoted $\nu_p(z)$, is the largest integer $m \geq 0$ such that $z \in p^m\Z_p$.
For any $m\geq 0$, let $\Ga_m(p)$ be the subgroup of $\Z_p^\times$ consising of all $\zeta$ such that $\nu_p(\zeta-1) \geq m$. Thus $\units{0} = \Z^\times_p$ and for any $m>0$ any $\zeta\in\units{m}$ has the form $\zeta = 1 +ap^m +\cdots$.
\end{Defi}

We note that since $\Ga$ is a finite $p$-group, and since $S_0\cong (\Z/p^\infty)^r$, a standard transfer argument shows that $H^i(\Ga,S_0)$ is a finite abelian $p$-group of exponent bounded above by the order of $\Ga$ for all $i >0$.

\begin{Prop}\label{prop-deg-section}
Let $S$ be a discrete $p$-toral group, and assume that the order of its extension class $[S] \in H^2(\Gamma, S_0)$ is $p^m$.
Then
\begin{enumerate}
\item
\label{deg-section:in-gamma}
If $\psi$ is a normal Adams automorphism, then $\deg(\psi)\in\Ga_m(p)$.

\item
\label{deg-section:split-epi}
The homomorphism $\adams(S;1_\Ga) \xto{\deg} \Ga_m(p)$ is split surjective.
Moreover, there exists a section $\si$ of $S$, such that  for any  $\zeta \in \Ga_m(p)$ the function $\psi_\zeta$ which takes any $s=t\cdot \sigma(x)\in S$ to $t^\zeta\cdot\sigma(x)$ is a normal Adams automorphism of degree $\zeta$ on $S$, and the map taking  $\zeta\in\units{m}$ to $\psi_\zeta$ is a right inverse to the degree map. 

\item
\label{deg-section:H1}
The kernel of the degree homomorphism in \ref{deg-section:split-epi} is $\adams_1(S;1_\Ga)$ and there is an isomrphism $\adams_1(S;1_\Ga)/\Aut_{S_0}(S) \cong H^1(\Ga,S_0)$.
\end{enumerate}
\end{Prop}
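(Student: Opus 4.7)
The plan is as follows.

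For part \ref{deg-section:in-gamma}, I would apply Lemma \ref{Adams-autos-class}\ref{Adams-autos-class:part1}. A normal Adams automorphism $\psi$ of degree $\zeta$ lies in $\Aut(S;\zeta,1_\Ga)$, where here $\zeta$ denotes the $\zeta$-power automorphism on $S_0$. Nonemptiness of this set forces $\zeta_*[S] = (1_\Ga)^*[S] = [S]$ in $H^2(\Ga,S_0)$. Since $S_0$ is naturally a $\Z_p$-module and the $\Ga$-action is by $\Z_p$-linear automorphisms, $H^2(\Ga,S_0)$ inherits a $\Z_p$-module structure on which $\zeta_*$ acts as multiplication by $\zeta$. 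Thus $(\zeta-1)\cdot[S]=0$, and since $[S]$ has order $p^m$, this forces $p^m\mid\zeta-1$, i.e., $\zeta\in\Ga_m(p)$.

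For part \ref{deg-section:split-epi}, the objective is to construct an explicit group homomorphism $\Ga_m(p)\to\adams(S;1_\Ga)$ splitting $\deg$. The essential step is to choose a normalized section $\sigma\colon \Ga\to S$ of the projection (with $\sigma(1)=1$) whose associated $2$-cocycle $f$, defined by $\sigma(x)\sigma(y)=f(x,y)\sigma(xy)$, takes values in the $p^m$-torsion subgroup $S_0[p^m]$. The existence of such a section follows from the long exact sequence in cohomology arising from the short exact sequence $0\to S_0[p^m]\to S_0\xto{\cdot p^m}S_0\to 0$, which is exact because $S_0$ is divisible. Since $p^m\cdot[S]=0$, the class $[S]$ lifts to a class in $H^2(\Ga,S_0[p^m])$, represented by a cocycle with values in $S_0[p^m]$, which can then be normalized within $S_0[p^m]$ by a standard coboundary adjustment.

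With such a section fixed, I would define $\psi_\zeta(t\sigma(x))=t^\zeta\sigma(x)$ for $t\in S_0$, $x\in\Ga$, and $\zeta\in\Ga_m(p)$. The central point is to verify that $\psi_\zeta$ is a homomorphism. Writing $s_i=t_i\sigma(x_i)$ and expanding both sides of $\psi_\zeta(s_1 s_2)=\psi_\zeta(s_1)\psi_\zeta(s_2)$ using the cocycle identity, the check reduces to the single equality $f(x,y)^\zeta=f(x,y)$; this holds because $f(x,y)\in S_0[p^m]$ and $\zeta-1\in p^m\Z_p$ annihilates $S_0[p^m]$. The remaining assertions---that $\psi_\zeta$ is a normal Adams automorphism of degree $\zeta$, is invertible with inverse $\psi_{\zeta^{-1}}$, and that $\psi_\zeta\circ\psi_{\zeta'}=\psi_{\zeta\zeta'}$---follow immediately from the defining formula.

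For part \ref{deg-section:H1}, the identification of $\ker(\deg|_{\adams(S;1_\Ga)})$ with $\adams_1(S;1_\Ga)$ is immediate from the definitions. The isomorphism $\adams_1(S;1_\Ga)/\Aut_{S_0}(S)\cong H^1(\Ga,S_0)$ then follows by applying Lemma \ref{Adams-autos-class}\ref{Adams-autos-class:part2} with $\la=1_{S_0}$ and $\vp=1_\Ga$. The main obstacle throughout is the cocycle construction in part \ref{deg-section:split-epi}; once the representing cocycle is arranged to take values in $S_0[p^m]$, all remaining arguments are essentially formal.
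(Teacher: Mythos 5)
Your proposal is correct, and parts \ref{deg-section:in-gamma} and \ref{deg-section:H1} coincide with the paper's argument: both deduce \ref{deg-section:in-gamma} from Lemma \ref{Adams-autos-class}\ref{Adams-autos-class:part1} together with the fact that the $\zeta$-power map induces multiplication by $\zeta$ on $H^2(\Ga,S_0)$, and both obtain \ref{deg-section:H1} from Lemma \ref{Adams-autos-class}\ref{Adams-autos-class:part2} applied to $\Aut(S;1_{S_0},1_\Ga)$. Where you genuinely diverge is in how you produce the good section in part \ref{deg-section:split-epi}. The paper bootstraps from part \ref{deg-section:in-gamma}: since $(1+p^m)-1$ kills $[S]$, the set $\Aut(S;1+p^m,1_\Ga)$ is nonempty, and Lemma \ref{Adams aut => section} then supplies a section $\si$ fixed by such an automorphism $\psi$; the cocycle values $\de_{x,y}=\si(x)\si(y)\si(xy)^{-1}$ are fixed by $\psi$, hence satisfy $\de_{x,y}^{p^m}=1$. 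You instead argue purely cohomologically: using divisibility of $S_0$ and the exact sequence $0\to S_0[p^m]\to S_0\to S_0\to 0$, the relation $p^m\cdot[S]=0$ lets you lift $[S]$ to $H^2(\Ga,S_0[p^m])$ and hence choose a (normalized) section whose cocycle takes values in $S_0[p^m]$, after which the verification that $\psi_\zeta(t\si(x))=t^\zeta\si(x)$ is a homomorphism reduces, exactly as in the paper, to $f(x,y)^{\zeta-1}=1$. Both routes are sound; the paper's has the advantage of reusing Lemmas \ref{Adams-autos-class} and \ref{Adams aut => section} with no explicit cochain manipulation, while yours is self-contained at the level of cohomology, makes the role of the torsion subgroup $S_0[p^m]$ transparent, and does not need the auxiliary automorphism of degree $1+p^m$ (though it does require the standard facts that cohomologous cocycles of the extension are realized by sections and that the normalizing coboundary can be taken with values in $S_0[p^m]$, which you correctly note).
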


\begin{proof}
\ref{deg-section:in-gamma}
Set $\eta=[S]$.
By Lemma \ref{Adams-autos-class}\ref{Adams-autos-class:part1}, $\Aut(S,\zeta,1_{\Gamma})$ is not empty if and only if $\eta=\eta^\zeta$, namely $\eta^{\zeta-1}=0 \in H^2(\Ga,S_0)$.
This happens if and only if $\zeta \in 1+ p^m\Z_p$, namely $\zeta \in \Ga_m(p)$.
In addition this shows that the degree map in \ref{deg-section:split-epi} is surjective.

\ref{deg-section:split-epi}
Fix some $\psi \in \Aut(S;1+p^m;1_{\Gamma})$ which is not empty by \ref{deg-section:in-gamma}.
By Lemma \ref{Adams aut => section} there exists a section $\si$ of $S$ fixed by $\psi$.
We may assume that $\si(1)=1$.
Note that $\delta_{x,y} \defeq \si(x)\si(y)\si(xy)^{-1}$ is fixed by $\psi$ for any $x,y \in \Ga$.
But $\psi(\delta_{x,y})=(\delta_{x,y})^{1+p^m}$ because  $\de_{x,y} \in S_0$.
Thus, $(\delta_{x,y})^{p^m}=1$ for all $x,y \in \Ga$.
Define for every $\zeta \in \Ga_m(p)$ a function $\psi_\zeta \colon S \to S$ by 
\[
\psi_\zeta(t \cdot \si(x)) = t^\zeta \cdot \si(x), \qquad (t \in S_0, x \in \Ga).
\]
It is easy to check that $\psi(1)=1$ because $\si(1)=1$, that $\psi_\zeta(t_1\si(x) \cdot t_2\si(y))=\psi_\zeta((t_1\si(x)) \cdot\psi_\zeta( t_2\si(y))$ because $(\delta_{x,y})^{p^m}=1$, and that $\psi_\zeta$ induces the identity on $\Ga$ and restricts to  the $\zeta$-power map on $S_0$.
In other words $\psi_\zeta$ is a normal Adams automorphism of degree $\zeta$.
Also $\psi_\zeta \circ \psi_{\zeta'}=\psi_{\zeta\zeta'}$ by inspection.
Hence  the map which takes $\zeta\in\units{m}$ to $\psi_\zeta$ is a right inverse to the degree map.

\ref{deg-section:H1}
Apply Lemma \ref{Adams-autos-class}\ref{Adams-autos-class:part2} to $\Aut(S;1_{S_0},1_\Ga)$.
\end{proof}

\newsect{Unsatble Adams Operations.}
\label{sec:unstable-adams-operations}

In this section we revise the definition and some of the known results for unstable Adams operations on compact Lie groups and $p$-compact groups, and suggest a suitable definition for $p$-local compact groups.

For any compact Lie group $G$, a self map $f\colon BG\to BG$ is called an unstable Adams operation of degree $k\in \N$, if $f$ induces multiplication by $k^i$ on $H^{2i}(BG, \Q)$. Notice that for compact Lie groups this is not a $p$-local concept. 

In the seminal paper by Jackowski, McClure and Oliver \cite{JMO}, the authors prove the following.
\begin{Thm}{\cite[Thms. 1, 2]{JMO}}\label{JMO}
Let $G$ 	be a compact connected Lie group. Then
\begin{enumerate}
\item for each natural number $k>0$ there is at most one unstable Adams operation $\psi^k\colon BG\to BG$ of degree $k$. 
\item If $G$ is in addition simple, then there is an isomorphism of monoids
\[\left[BG, BG\right] \cong \Rep(G,G)\wedge\left\{k\geq 0\quad|\quad k=0, \quad \mathrm{or}\quad (k,|W|) = 1\right\},\]
where $W = N_G(T)/T$, and $T$ is the maximal torus of $G$.
\end{enumerate}
\end{Thm}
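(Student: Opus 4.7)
The plan is to reduce the analysis of self-maps of $BG$ to the study of maps out of the classifying space of the maximal torus $T$, combined with an obstruction-theoretic computation based on a homotopy decomposition of $BG$. The key tool is the restriction map $[BG,BG] \to [BT,BG]$. By the Dwyer--Zabrodsky theorem on maps from classifying spaces of tori, together with the fact that any homomorphism $T \to G$ factors through some maximal torus of $G$, one has $[BT,BG] \cong \Hom(T,T)/W$, where $W$ acts on the target by conjugation.

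For the uniqueness statement (i), observe that an unstable Adams operation $\psi^k$ of degree $k$ induces multiplication by $k^i$ on $H^{2i}(BG,\Q)$. Since $H^*(BG,\Q) \cong H^*(BT,\Q)^W$ is a polynomial algebra on generators of even positive degree, the restriction $\psi^k|_{BT}$ must correspond (up to $W$) to the $k$-th power map $T \to T$. Hence any two Adams operations of the same degree agree in $[BT,BG]$. It remains to show that the restriction map is injective on this specific subset of classes, which is the central technical issue. The approach would be to exploit the Jackowski--McClure decomposition of $BG$ (or of its $p$-completion) as a homotopy colimit over the orbit category of $p$-stubborn subgroups of $G$, and to identify the obstructions to uniqueness of the lift as classes in derived inverse limits $\lim^i \pi_* \map(B(-), BG)$ over this category. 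Prime-by-prime vanishing of these groups in the relevant bidegrees, using Oliver's computations on the cohomology of orbit categories, would close the argument.

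For the structural statement (ii), one identifies $\Rep(G,G) \hookrightarrow [BG,BG]$ as the subset of classes induced by honest group homomorphisms, and one needs to account for the remaining classes as unstable Adams operations of degrees $k$ satisfying $(k,|W|)=1$, together with $k=0$ giving the null map. Existence in the coprime range would proceed by a Sullivan--Wilkerson-type averaging argument: invertibility of $k$ modulo $|W|$ allows the $k$-th power self-map of $BT$ to be promoted to a $W$-equivariant map, and then lifted to $BG$. Non-existence for other positive $k$ when $G$ is simple follows once again from the same orbit category spectral sequence, where primes $p$ dividing both $|W|$ and $k$ create obstructions which cannot be cancelled in the simple case.

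The principal obstacle in both parts is the prime-by-prime vanishing analysis of the $\lim^i$ groups over the orbit category of $p$-stubborn subgroups, which depends on deep calculations of Oliver and others. The connectedness of $G$ plays an essential simplifying role throughout, and the simple case in (ii) refines the analysis further using the structure of the Weyl group action on $T$.
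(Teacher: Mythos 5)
You should first note that the paper does not prove this statement at all: it is quoted verbatim as background from Jackowski--McClure--Oliver \cite{JMO}, so the only ``proof'' in the paper is the citation. Your sketch does follow the genuine JMO route (Dwyer--Zabrodsky to compute $[BT,BG]$, the homotopy decomposition of $BG\pcom$ over the orbit category of $p$-stubborn subgroups, and obstruction classes living in higher limits over that category), so the strategy is the right one. But as a proof it has a genuine gap: the entire technical content of both parts is exactly the vanishing of the groups $\lim^i$ of the functors $\pi_j\,\mathrm{Map}(BP,BG\pcom)$ over the $p$-stubborn orbit categories, and you assert this vanishing (``prime-by-prime vanishing \dots would close the argument'') rather than prove it or even state precisely which bidegrees must vanish and why Oliver's computations of $\Lambda$-functors give that vanishing. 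Without that, neither the injectivity of $[BG,BG]\to[BT,BG]$ on Adams-type classes in (i) nor the completeness of the list in (ii) is established.

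There are further missing steps in (ii). The statement is integral, while the decomposition argument only controls $p$-completed mapping spaces; you need Sullivan's arithmetic square (together with rationalization data) to reassemble $[BG,BG]$ from the $[BG\pcom,BG\pcom]$, and this gluing is where the coprimality condition $(k,|W|)=1$ actually enters --- your ``$W$-equivariant averaging'' of the $k$-th power map on $BT$ does not by itself produce a self-map of $BG$, since a $W$-map $BT\to BT$ only descends after completion at primes where the relevant obstructions vanish, and existence of $\psi^k$ for $(k,|W|)=1$ is itself a nontrivial theorem (Sullivan, Wilkerson, and ultimately JMO). Finally, the isomorphism in (ii) is one of \emph{monoids}: you must also identify composition on both sides, show $\mathrm{Rep}(G,G)\to[BG,BG]$ is injective (a separate rigidity statement), and show that every class is either null, induced by a homomorphism, or an Adams operation composed with such --- none of which follows formally from the uniqueness statement in (i). So the proposal is a correct road map of the JMO argument, but the decisive computations and the integral assembly are left unproved.
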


For $p$-compact groups $(X,BX)$ \cite{DW}, one defines an unstable Adams operation of degree $\zeta$, where $\zeta\in\Z_p$ is a $p$-adic unit, to be any self map $f$ of $BX$ which induces multiplication by $\zeta^i$ on $H^{2i}_{\Q_p}(BX) \defeq H^{2i}(BX,\Z_p)\otimes \mathbb{Q}$. 
In light of \cite[Theorem 9.7]{DW}, this is equivalent to the requirement that the restriction of $f$ to the maximal torus $T$ of $X$ is an Adams automorphism of degree $\zeta$.

\begin{Thm}[\cite{AG,AGMV}]
For any connected $p$-compact group there exists exactly one unstable Adams operation of degree $\zeta$ for every $p$-adic unit $\zeta$.
\end{Thm}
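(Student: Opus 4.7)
The plan is to reduce the problem to a lifting question on the maximal torus and then invoke the rigidity theorems available for connected $p$-compact groups. Write $X$ for the $p$-compact group, with maximal torus $i\colon T \to X$ and Weyl group $W$, so $H^*_{\Q_p}(BX) \cong H^*_{\Q_p}(BT)^W$. By the discussion preceding the theorem, a self-map $f\colon BX \to BX$ is an unstable Adams operation of degree $\zeta$ precisely when $f \circ Bi$ is freely homotopic to $Bi \circ B\psi^\zeta$, where $B\psi^\zeta \colon BT \to BT$ denotes the $\zeta$-power map.

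For existence, I would first construct a self-equivalence of the maximal torus normalizer $BN$ covering $B\psi^\zeta$. The key observation is that $\zeta$ acts on $T$ as a central scalar, so the power map is strictly $W$-equivariant and therefore induces a self-map of the classifying space of the normalizer. The extension of this self-map from $BN$ to $BX$ is then governed by the obstruction theory for maps out of $BX$: the relevant higher limits over the orbit category of $X$ vanish because the centrality of $\zeta$ forces the obstruction cocycles into Weyl-invariant pieces of $H^*(BT; \pi_*(X\pcom))$ that turn out to be trivial in the connected case.

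For uniqueness, the plan is to appeal to an Adams-Mahmud type injectivity statement for connected $p$-compact groups, to the effect that the restriction map $[BX, BX] \to [BT, BX]/W$ is injective, which is the $p$-compact analogue of the first part of Theorem \ref{JMO}. Since any two Adams operations of degree $\zeta$ have the same restriction to $BT$ (namely $Bi \circ B\psi^\zeta$, which is $W$-fixed because $\zeta$ is central), they must agree up to homotopy on $BX$.

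The principal obstacle is the extension step: showing that a $W$-equivariant self-map of $BT$ genuinely lifts to a self-map of $BX$ rests on a full tower of vanishing higher limits over the orbit category of $X$. These cohomological computations, which are specific to the $p$-compact setting and are the technical heart of the references \cite{AG,AGMV}, are what force us to cite rather than reprove the result here.
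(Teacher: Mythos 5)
Your existence step has a genuine gap, and it occurs at exactly the point this paper is organised around. $W$-equivariance of the $\zeta$-power map on $T$ does \emph{not} by itself give a self-equivalence of $BN$ covering the identity on $BW$: the fibration $BT\to BN\to BW$ is classified by a class $k$ in $H^3(W;L)$ (in a discrete model, by the extension class of the normalizer, cf.\ Lemma \ref{Adams-autos-class} and Proposition \ref{prop-deg-section}), and a fibrewise self-equivalence inducing $\zeta$ on the fibre and the identity on the base exists only if $(\zeta-1)k=0$. Since this obstruction group is finite, that is a nontrivial congruence condition on $\zeta$, not a formal consequence of centrality; it is precisely the phenomenon which forces the main theorem of the present paper to be restricted to $\zeta\in\units{k}$ rather than all of $\Z_p^\times$. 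To run your argument for \emph{every} $p$-adic unit you would need, at the very least, the splitting (or $\zeta$-invariance of the extension class) of the maximal torus normalizer of a connected $p$-compact group -- a nontrivial theorem of Andersen for $p$ odd, with a separate analysis needed at $p=2$ -- and you supply no argument. The subsequent extension from $BN$ to $BX$ is also asserted with an incorrect mechanism: the obstructions live in higher limits over centralizer/orbit-type categories with coefficients in homotopy groups of centralizers, not in ``Weyl-invariant pieces of $H^*(BT;\pi_*(X\pcom))$'', their vanishing is not forced by centrality of $\zeta$, and no statement in the form you invoke is directly quotable from \cite{AG,AGMV}.

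For contrast, the deduction used here is much shorter and bypasses both difficulties: by \cite[Thm.~1.2]{AG}, $\Out(BX)\cong\Aut(D_X)/W_X$ where $D_X$ is the $\Z_p$-root datum, and unstable Adams operations correspond exactly to the central scalars $\Z_p^\times\le\Aut(L_X)\cong\GL_r(\Z_p)$, which yields existence and uniqueness of the operation of each degree $\zeta$ simultaneously (with \cite{AGMV}, and Notbohm's work on $DI(4)$, covering the odd-prime cases). Note also that the ``Adams--Mahmud type injectivity'' you invoke for uniqueness is, in the $p$-compact setting, itself part of this same classification package rather than an independently available analogue of Theorem \ref{JMO}, so your proposed route does not actually reduce to citable statements short of the classification.
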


\begin{proof}
This is immediate from \cite[Theorem 1.2]{AG} which shows that $\Out(BX)=\pi_0\Aut(BX)$ is isomorphic to $\Out(D_X)=\Aut(D_X)/W_X$ where $D_X=(W_X,L_X,\{ \Z_p b_\si\})$ is a root datum associated to a $p$-compact group $X$.
The Adams automorphisms form the centre of $\Aut(D_X)$ which is also the the centre of $\Aut(L_X) \cong \GL_r(\Z_p)$ where $r$ is the rank of $X$.
We remark that for odd primes the result can be deduced from \cite[Thm. 1.1]{AGMV}; for $BDI(4)$ defined in \cite{DW1} it is contained in \cite[Thm. 3.5]{N}.
\end{proof}

While it is possible to define unstable Adams operations of $p$-local compact groups by referring to their classifying spaces or their rational cohomology, we choose to make the definition using the combinatorial structure of a $p$-local compact group, in order to enable us to use all the power of the theory in our analysis. It might turn out of course that the definition below is in fact equivalent to the obvious cohomological definition, but this question will not be addressed in this paper.

\begin{Defi}\label{uAop-def}
Let $\calg=(S,\calf,\call)$ be a $p$-local compact group.
An \emph{Adams operation} of degree $\zeta \in \Z_p^\times$ on $\calg$ is a pair $(\psi,\Psi)$ where $\psi$ is a fusion preserving Adams automorphism of $S$ of degree $\zeta$, (see Definitions \ref{def-adams-automorphisms} and \ref{def-fusion-preserving}), and $\Psi \colon \call \to \call$ is an automorphism which covers $\psi$ (see Definition \ref{def-functor-covers-F}).
\end{Defi}

At this stage it is worth pointing out the reason for including condition (ii) in Definition \ref{def-functor-covers-F}. Consider the example of the trivial fusion and linking systems over a discrete torus $T$. Then $\call = \calb(T)$, while in $\calf$ the only morphisms are inclusions. Thus any power map on $T$ induces the identity functor on $\calf$, and therefore the pair $(\zeta, \mathrm{Id}_\call)$ satisfies condition (i) of the definition. However the realization of the identity functor clearly induces the identity on $BT$. The authors are grateful to Alex Gonzalez for this observation. 

Let $\adams(\calg)$ denote the group of all the Adams operations of $\calg$.
Let $\adams(\F)$ denote the group of all the fusion preserving Adams automorphism of $S$.
The assignment $(\Psi,\psi) \mapsto \psi$ gives rise to a homomorphism $\al \colon \adams(\calg) \to \adams(\F) \leq \adams(S)$.
Let   $\adams(\F;1_\Ga)\le \adams(\F)$  be the subgroup of fusion preserving normal Adams automorphisms of $S$. Let $\adams(\calg;1_\Ga)$ denote the group of ``normal'' operations (cf. Definition \ref{def-adams-automorphisms}). Then $\alpha$ restricts to a homomorphism $\alpha\colon\adams(\calg;1_\Ga)\to\adams(\calf,1_\Ga)$. Finally, let $\adams_1(\calg)$ be the subgroup of operations of degree $1$.

We end this section by introducing a more geometric definition of an unstable Adams operation, and prove that the two definitions are equivalent. For a $p$-local compact group $\calg=\SFL$, let $\iota\colon BS\to B\calg$ be the canonical inclusion induced by the distinguished monomorphism $\delta_S\colon S\to \Aut_\call(S)$. Recall \cite[Thm. 6.3]{BLO3} that for any discrete $p$-toral group $Q$ there is a bijection
\[[BQ, B\calg] \cong \Rep(Q,\call)\defeq \Hom(Q,S)/\sim,\]
where for any $\rho, \rho'\in \Hom(Q,S)$,  $\rho\sim\rho'$ if there is some $\chi\in\Hom_\calf(\rho(Q),\rho(Q'))$ such that $\rho' = \chi\circ\rho$. In particular $\Rep(S,\call) = \End(S)/\Aut_\calf(S)$.

\begin{Defi}\label{geo-uAop}
Let $\calg = \SFL$ be a $p$-local compact group. A geometric unstable Adams operation on $\calg$ is a self equivalence  $f$ of $B\calg$ such that  the  homotopy class of the composite \[BS\xrightarrow{\iota} B\calg\xrightarrow{f} B\calg\] represents the class of an Adams automorphism in $\End(S)$ under the identification above. Equivalently $f$ is a geometric Adams operation on $\calg$ if there exists an Adams automorphism $\phi\in\adams(S)$ such that $\iota\circ B\phi \simeq f\circ\iota$.
\end{Defi}

Geometric unstable Adams operations on a $p$-local compact group $\calg$ clearly form a topological monoid, which we denote by $\adams^g(\calg)$, under composition, where the topology is induced from that of the topological monoid of all self homotopy equivalences of $B\calg$.

\begin{Prop}\label{geo-vs-alg-uAops}
For any $p$-local compact group $\calg$, there is an epimorphism of groups $\gamma\colon\adams(\calg)\to\pi_0(\adams^g(\calg))$ whose kernel is the subgroup of all unstable Adams operations $(\psi, \Psi)$ such that $|\Psi|\pcom\simeq Id_{B\calg}$. 
In this case $\psi\in\Aut_\calf(S)$ and $\deg(\psi)$ is a root of unity, whence $\zeta^{p-1}=1$ if $p\neq 2$, and $\zeta=\pm 1$ if $p=2$ (cf. \cite[Sec. 6.7, Prop. 1,2]{Ro}).
\end{Prop}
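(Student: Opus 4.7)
The plan is to define $\gamma(\psi,\Psi) := [|\Psi|\pcom]$ and verify the asserted properties step by step. First, to see $|\Psi|\pcom$ is indeed a geometric Adams operation with associated automorphism $\psi$, I would use that $\Psi$ covers $\psi$ in the sense of Definition \ref{def-functor-covers-F}: condition (ii) gives $\Psi(\widehat{g})=\widehat{\psi(g)}$ for all $g\in S$, so the restriction of $\Psi$ to $\Aut_\call(S)$ agrees, under the distinguished monomorphism $\delta_S$, with the automorphism $\psi_*$ induced by $\psi$. Passing to nerves and $p$-completing yields a homotopy commutative square
\[\xymatrix{BS \ar[r]^-{\iota}\ar[d]_{B\psi} & B\calg \ar[d]^{|\Psi|\pcom} \\ BS \ar[r]^-{\iota} & B\calg,}\]
so $|\Psi|\pcom$ is a geometric unstable Adams operation in the sense of Definition \ref{geo-uAop}. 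That $\gamma$ is a group homomorphism is then immediate, since realization of functors is strictly compatible with composition and $p$-completion preserves homotopies.

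For surjectivity, let $f\in\adams^g(\calg)$, so $f\circ\iota\simeq\iota\circ B\psi$ for some Adams automorphism $\psi\in\adams(S)$. The plan is first to argue that $\psi$ must be fusion preserving, by applying the bijection $[BP,B\calg]\cong\Rep(P,\call)$ of \cite[Thm. 6.3]{BLO3} to each subgroup $P\leq S$ and translating $\calf$-conjugations on $P$ into homotopies that are absorbed by $f$, and second to construct a functor $\Psi\colon\call\to\call$ covering $\psi$ whose $p$-completed realization is homotopic to $f$. This last step is the main obstacle: it is a rigidification statement, analogous to \cite[Thm. 8.1]{BLO2} in the $p$-local finite setting, saying that $\pi_0\Aut(\call)\to\pi_0\Aut(B\calg)$ is surjective. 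Granted such a statement, one picks an automorphism $\Psi'$ of $\call$ with $|\Psi'|\pcom\simeq f$; its restriction to $\Aut_\call(S)$ provides an automorphism of $S$ which differs from $\psi$ by an element of $\Aut_\calf(S)$, and after correcting $\Psi'$ by the corresponding distinguished morphism one obtains a genuine pair $(\psi,\Psi)\in\adams(\calg)$ with $\gamma(\psi,\Psi)=[f]$.

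Finally, for the kernel, suppose $|\Psi|\pcom\simeq Id_{B\calg}$. Restricting along $\iota$ yields $\iota\circ B\psi\simeq\iota$, so under the bijection $[BS,B\calg]\cong\End(S)/\Aut_\calf(S)$ this forces $\psi$ to be equivalent to the identity modulo $\calf$-automorphisms, i.e.\ $\psi\in\Aut_\calf(S)$. To see that $\zeta=\deg(\psi)$ is a root of unity, recall that in a saturated fusion system over a discrete $p$-toral group, $\Aut_\calf(S_0)$ is a finite subgroup of $\Aut(S_0)\cong\GL_r(\Z_p)$ (the Weyl group of $\calf$). Since $S_0$ is characteristic in $S$, restriction gives a homomorphism $\Aut_\calf(S)\to\Aut_\calf(S_0)$, sending $\psi$ to the $\zeta$-power map, which therefore has finite order in $\Z_p^\times$. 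The torsion subgroup of $\Z_p^\times$ being $\Z/(p-1)$ for $p$ odd and $\{\pm 1\}$ for $p=2$ yields $\zeta^{p-1}=1$ or $\zeta=\pm 1$ respectively, completing the argument.
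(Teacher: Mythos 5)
Your overall route is the same as the paper's: define $\gamma(\psi,\Psi)=[|\Psi|\pcom]$, use condition (ii) of Definition \ref{def-functor-covers-F} for well-definedness, read off the kernel via the bijection $[BS,B\calg]\cong\End(S)/\Aut_\calf(S)$, and obtain surjectivity from a rigidification theorem. The statement you leave as ``granted'' is not actually a gap in the literature: it is exactly \cite[Thm.~7.1]{BLO3}, which gives $\Out\isotyp(\call)\cong\Out(B\calg)$ for $p$-local compact groups, and this is precisely what the paper invokes. However, your surjectivity paragraph compresses the two places where real work remains even after that citation. First, the self-equivalence $\Phi'$ produced by \cite[Thm.~7.1]{BLO3} is only isotypical; one must normalise it to send inclusions to inclusions with $\Phi'(P)=\phi'(P)$, and then verify condition (ii) of ``covers'', i.e.\ $\Phi'(\widehat{g})=\widehat{\phi'(g)}$, which the paper deduces from axiom (C) together with Corollary \ref{cor-morphisms-of-L-are-mono-and-epi}. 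Second, the correction step is not by ``the corresponding distinguished morphism'': the discrepancy $\alpha\in\Aut_\calf(S)$ is in general not inner, so one must choose a lift $\widetilde{\alpha}\in\Aut_\call(S)$, build from its restrictions a functor $\Theta$ covering $\alpha$, and—crucially—check that these restrictions assemble into a natural isomorphism $\Id_\call\to\Theta$, so that $|\Theta|\simeq\Id$ and the corrected pair $(\phi,\Phi'\circ\Theta)$ still maps to $[f]$ under $\gamma$. Without that last check your corrected operation need not represent the class of $f$. Also, your preliminary step of proving directly that $\psi$ is fusion preserving is unnecessary once you use \cite[Thm.~7.1]{BLO3}; fusion preservation of $\phi'$ comes out of the construction (diagram (18) in \cite{BLO3}).

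Your kernel argument is a correct mild variant of the paper's: you restrict to $S_0$ and use finiteness of the ``Weyl group'' $\Aut_\calf(S_0)$ (which does follow, e.g.\ from \cite[Lemma 2.5]{BLO3}, since $S_0$ is abelian so the conjugation action is trivial), whereas the paper uses finiteness of $\Out_\calf(S)$ from the saturation axioms together with the fact that $S$ is a torsion group to conclude $\psi$ has finite order; both give that $\zeta$ is a root of unity, and the identification of the torsion of $\Z_p^\times$ is the same.
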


\begin{proof}
Fix a $p$-local compact group $\calg=\SFL$.
Define $\gamma\colon \adams(\calg)\to \pi_0(\adams^g(\calg))$ by
\[\gamma(\psi, \Psi) = [|\Psi|\pcom]\]
for each  unstable Adams operation $(\psi, \Psi)\in\adams(\calg)$. 
By Condition (ii) of Definition \ref{def-functor-covers-F}, the following square commutes
\[\xymatrix{
BS \ar[rr]^{B\psi}\ar_{\iota}[d] && BS\ar^{\iota}[d]\\
B\calg \ar^{|\Psi|\pcom}[rr]&& B\calg.
}\]
Thus $|\Psi|\pcom$ is a geometric unstable Adams operation on $\calg$, and so $\gamma$ is well defined, sends $(1_S, Id_\call)$ to the identity element and it clearly respects compositions. 

Suppose that $\gamma(\psi, \Psi) = 1\in\pi_0(\adams^g(\calg))$. 
Then $\iota\circ B\psi\sim \iota$, and so by \cite[Thm. 6.3 (a)]{BLO3}, $\psi\in\Aut_\calf(S)$. 
Since $\Out_\calf(S)\defeq \Aut_\calf(S)/\Inn(S)$ is finite by \cite[Def. 2.2, (I)]{BLO3} and since all the elements of $S$ have finite order, this is also the case for all the elements of $\Aut_\F(S)$.
Therefore $\psi^k=\Id$ for some $k >0$.
%
%
But $\psi^k$ induces the $\zeta^k$-power map on the maximal torus $S_0$ and therefore $\zeta^k=1$, namely $\zeta$ is a root of unity.
%
%

It remains to show that $\gamma$ is surjective. We shall  use  \cite[Thm. 7.1]{BLO3} and various points in its proof. Recall first that a self equivalence $\Phi$ of $\call$ is said to be isotypical if for each $P\in\call$, the isomorphism $\Phi_{P,P}\colon \Aut_\call(P)\to \Aut_\call(\Phi(P))$ sends $\delta_P(P)$ to $\delta_{\Phi(P)}(\Phi(P))$.

Let $f\in\adams^g(\calg)$ be a geometric unstable Adams operation. By  \cite[Thm 7.1]{BLO3} there is a group isomorphism
\[\Out\isotyp(\call) \cong
\Out(B\calg),\]
where $\Out\isotyp(\call) = \pi_0(\Aut\isotyp(\call))$ is the group of equivalence classes (under natural isomorphisms) of isotypical self equivalences of $\call$, and $\Out(B\calg)$ is the group of homotopy classes of self equivalences of $B\calg$. Let $\Phi'\in\Aut\isotyp(\call)$ be a representative of the homotopy class of $f$ under this isomorphism, so $|\Phi'|\pcom \simeq f$. Let $\phi'\in\Aut(S)$ denote the restriction of $\Phi'$ to $S\cong\delta_S(S)\le\Aut_\call(S)$. 
Without loss of generality we may assume that $\Phi'$ sends inclusions to inclusions and that $\Phi'(P) = \phi'(P)$ for every $P\in\call$ (cf. \cite[p. 368]{BLO3}).  
This implies that $\Phi'(\widehat{g}) = \widehat{\phi'(g)}$ for all $g\in N_S(P,Q)$ because by applying $\Phi'$ to the equality $\iota_Q^S \circ \widehat{g} = \widehat{g} \circ \iota_P^S$ we obtain $\iota_{\phi'(Q)}^S \circ \Phi'(\widehat{g}) = \widehat{\phi'(g)} \circ \iota_{\phi'(P)}^S$, while from axiom (C) we get $\iota_{\phi'(Q)}^S \circ \widehat{\phi'(g)} = \widehat{\phi'(g)} \circ \iota_{\phi'(P)}^S$, so Corollary \ref{cor-morphisms-of-L-are-mono-and-epi} implies that $\Phi'(\widehat{g})=\widehat{\phi'(g)}$.

The commutative diagram (18) in \cite[p. 370]{BLO3} shows that $\phi'$ is fusion preserving and that $\iota\circ B\phi' \simeq f\circ \iota$.
Furthermore, if $\alpha\in\Iso_\call(P,Q)$, and $g\in P$, then by the fucntoriality of $\Phi'$, and Axiom (C),
\[\widehat{\phi'([\alpha](g))} = \Phi'(\alpha\circ\widehat{g}\circ\alpha^{-1})  = \Phi'(\alpha)\circ\Phi'(\widehat{g})\circ\Phi'(\alpha)^{-1} = \widehat{[\Phi'(\alpha)](\phi'(g))}.\] 
In other words, $\phi'_*([\alpha])=\phi'\circ[\alpha]\circ(\phi')^{-1} = [\Phi'(\alpha)]$. 
It follows that $\Phi'$ covers $\phi'$.

The pair $(\Phi',\phi')$ constructed above satisfies all the requirements from an unstable Adams operation on $\calg$, except, $\phi'$ need not be an Adams automorphism of $S$. 
By the assumption on $f$ and the commutative diagram (18) in \cite[p. 370]{BLO3}, there is some $\alpha\in\Aut_\calf(S)$, such that $\phi' = \phi\circ\alpha^{-1}$, where $\phi$ is an Adams automorphism of $S$, as in Definition \ref{geo-uAop}. 
Choose $\widetilde{\alpha}\in\Aut_\call(S)$ lifting $\alpha$. 
By Lemma \ref{lem-factor-in-L}(i) the morphism $\al|_P \in \Hom_\F(P,\al(P))$ has a unique lift to $\tilde{\al}|_P \in \Mor_\L(P,\al(P))$ such that $\iota_{\al(P)}^S \circ \tilde{\al}|_{P} = \tilde{\al} \circ \iota_P^S$.
Define $\Theta\colon\call\to\call$ by $\Theta(P) = \alpha(P)$, and define $\Theta$ on morphisms by 
\[
\Theta(P\xrightarrow{\varphi} Q) = \widetilde{\alpha}|_Q\circ\varphi\circ\widetilde{\alpha}|_{P}^{-1}.
\]
Then $\Theta$ is clearly well defined and covers $\alpha$ -- condition (i) of Definition \ref{def-functor-covers-F} is immediate and condition (ii) follows from axiom (C). 
In addition the morphisms $\tilde{\al}|_P$ give rise to a natural isomorphism $\Id_{\L} \to \Theta$, hence $|\Theta| \simeq \Id$.
Set $\Phi = \Phi'\circ\Theta$. 
Then $|\Phi'|\pcom\simeq|\Phi'|\pcom\simeq f$, and $\Phi|_{\delta_S(S)} = \phi$. 
Hence $(\phi, \Phi)$ is an unstable Adams operation of $\calg$ such that $\gamma(\phi,\Phi) = [f]$.
\end{proof}

\newsect{A construction of Adams operations}\label{sec:main-construction}

We are now ready to construct unstable Adams operations on $p$-local compact groups. Recall  (Def. \ref{Def-p-adic-val-gamma}) our notataion $\units{m}$ for the group of $p$-adic units $\zeta$ such that $\nu_p(\zeta-1)\ge m$. Throughout this section let $\calg=\SFL$ be a fixed $p$-local compact group.  Fix a set $\objset$ of representatives of the $S$-conjugacy classes of the subgroups in $\calh^\bullet(\F)$. Then $\objset$  is a finite set by \ref{bullet props}\ref{bullet props:Sccs}.
Let $\objset^c\subseteq\objset$ be the subset consisting of those subgroups which are $\F$-centric.

\begin{Thm}\label{main-thm}
Let $\calg=\SFL$ be a $p$-local compact group, let $S_0\le S$ be the maximal torus, and let $\Ga=S/S_0$.
Then
\begin{enumerate}
\item
\label{main-thm:F}
For a sufficiently large integer $k_\calf$ there exists a group homomorphism 
\[
\A_\calf \colon \units{k_\calf} \to \adams(\F;1_\Ga)
\]
such that $\A_\calf(\zeta)$ is a fusion preserving normal Adams automorphism of degree $\zeta$.

\item
\label{main-tm:G}
For a sufficiently large integer $k_\calg$ there exits a group homomorphism
\[
\A_\calg \colon \units{k_\calg} \to \adams(\calg;1_\Ga)
\]
whose composition with $\al \colon \adams(\calg;1_\Ga) \to \adams(\calf;1_\Ga)$ is equal to $\A_\calf|_{\units{k_\calg}}$.
\end{enumerate}
\end{Thm}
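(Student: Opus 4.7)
The proof proceeds in two stages, starting from the normal Adams automorphisms of $S$ supplied by Proposition \ref{prop-deg-section} and bootstrapping to a functorial construction on $\call$ through $\call^\bullet$.

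For Part \ref{main-thm:F}, I would fix a section $\sigma$ of $\pi \colon S \to \Gamma$ as in Proposition \ref{prop-deg-section}\ref{deg-section:split-epi} with $m = \ord([S])$, and define $\A_\calf(\zeta) = \psi_\zeta$ to be the normal Adams automorphism of degree $\zeta$ produced there. This assignment is already a group homomorphism $\units{m} \to \adams(S;1_\Gamma)$ splitting the degree map, so the only issue is to enlarge $m$ to a $k_\calf$ making every such $\psi_\zeta$ \emph{fusion preserving}. Proposition \ref{bullet props}\ref{bullet props:functor} reduces this verification to $\calf^\bullet$, which has only finitely many $S$-conjugacy classes $\objset$ of objects, and whose $\Out_\calf(P)$ is finite by saturation. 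So fusion preservation for $\psi_\zeta$ reduces to finitely many equalities of the form $\psi_\zeta \alpha \psi_\zeta^{-1} = \alpha'$ with $\alpha'$ a morphism in $\calf$, for $\alpha$ ranging over finitely many representatives of each $\Aut_\calf(P)$ with $P \in \objset$. A direct calculation using that $\psi_\zeta|_{S_0}$ is central in $\Aut(S_0)$ shows that $\psi_\zeta \alpha \psi_\zeta^{-1}$ already coincides with $\alpha$ on $P \cap S_0$, and differs from $\alpha$ on the finite set of coset representatives of $P/(P \cap S_0)$ only by multiplication by elements of the form $s^{\zeta-1}$, for finitely many $s \in S_0$ depending on $\alpha$ and $\sigma$. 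Taking $k_\calf$ large enough to annihilate all these finitely many $s^{\zeta-1}$ finishes the first part.

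For Part \ref{main-tm:G}, Proposition \ref{extend-fusion-preserving-from-lbullet-to-l} allows me to work inside $\call^\bullet$: it suffices, for each $\zeta \in \units{k_\calg}$ (where $k_\calg \ge k_\calf$ will be enlarged below), to construct a functor $\Psi^\bullet_\zeta \colon \call^\bullet \to \call^\bullet$ covering $\psi_\zeta$, and to verify that $\zeta \mapsto \Psi^\bullet_\zeta$ respects multiplication. On objects I take $\Psi^\bullet_\zeta(P) = \psi_\zeta(P)$, which equals $P$ for each $P \in \objset^c$ once $k_\calg$ is taken to bound the $p$-power orders of the cosets of $P/(P \cap S_0)$ in $S_0/(P \cap S_0)$. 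On morphisms, Lemma \ref{lem-factor-in-L}\ref{lem-factor-in-L:fac} reduces the data to: (a) values on morphisms $\widehat g$, forced by the covering axiom to be $\widehat{\psi_\zeta(g)}$; (b) values on inclusions $\iota_P^Q$, which I send to $\iota_{\psi_\zeta(P)}^{\psi_\zeta(Q)}$; and (c) an automorphism of each $\Aut_\call(P)$ covering the induced map on $\Aut_\calf(P)$ and restricting to $\psi_\zeta$ on $\delta_P(Z(P))$. The key observation for (c) is that $\Aut_\call(P)$ is itself a discrete $p$-toral group, so Proposition \ref{prop-deg-section} applies to it and produces a splitting $\units{k_P} \to \adams(\Aut_\call(P); 1)$ for a suitable $k_P$; by choosing these splittings compatibly with $\sigma$ and with the already fixed data on $\widehat g$'s and inclusions, one obtains candidate local values for $\Psi^\bullet_\zeta$.

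The main obstacle is the global coherence of all these local choices. The local data on each $\Aut_\call(P)$, together with the assignments on inclusions and $\widehat g$'s, must satisfy the finitely many composition relations of $\call^\bullet$ (to yield a functor), and the assignment $\zeta \mapsto \Psi^\bullet_\zeta$ must be multiplicative (to yield a homomorphism). Each such relation is a congruence inside some discrete $p$-toral group appearing in $\call^\bullet$, and since $\objset^c$ is finite there are only finitely many such relations; each one holds modulo a fixed $p$-power. Enlarging $k_\calg$ once more to absorb all these $p$-adic bounds forces every relation to hold on the nose. The resulting $\Psi_\zeta$ on $\call$, obtained by extending $\Psi^\bullet_\zeta$ via Proposition \ref{extend-fusion-preserving-from-lbullet-to-l}, yields the required homomorphism $\A_\calg$ whose composite with $\alpha$ is $\A_\calf|_{\units{k_\calg}}$.
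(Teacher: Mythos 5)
Your Part (i) is essentially the paper's argument (fix the section $\sigma$ from Proposition \ref{prop-deg-section}, then enlarge $k$ so that $\psi_\zeta$ fixes a finite list of elements attached to finitely many representative morphisms), but two details need repair. First, the conjugate $\psi_\zeta\circ f\circ\psi_\zeta^{-1}$ agrees with $f$ on the identity component $P_0$, where one knows $f(P_0)\le S_0$; it need not agree with $f$ on all of $P\cap S_0$, since $f$ does not in general carry $P\cap S_0$ into $S_0$. Second, reducing fusion preservation to the automorphism groups $\Aut_\calf(P)$, $P\in\objset$, is not justified as stated: you must also treat morphisms between non-$S$-conjugate objects, either by invoking an Alperin-type fusion theorem for saturated fusion systems over discrete $p$-toral groups, or, as the paper does, by choosing finite sets of representatives of $\morf{P}{Q}/N_S(Q)$ for \emph{all} pairs $P,Q\in\objset$ (finite by \cite[Lemma 2.5]{BLO3}) and arranging that $\psi_\zeta$ fixes each representative, after which a general morphism $c_h\circ f'\circ c_g$ is handled by conjugating $h$ and $g$.

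Part (ii) contains genuine gaps. (1) Your key observation for step (c) is false: $\Aut_\call(P)/\delta_P(P)\cong\Out_\calf(P)$, which is finite but in general not a $p$-group (already for $P=S$, where saturation forces $\Out_\calf(S)$ to have order prime to $p$), so $\Aut_\call(P)$ is not a discrete $p$-toral group and Proposition \ref{prop-deg-section} does not apply to it; moreover even where it would apply it only controls the restriction to the maximal torus, not compatibility with the already prescribed values $\widehat{g}\mapsto\widehat{\psi(g)}$ on all of $\delta_{P,P}(N_S(P))$. (2) The data (a)--(c) does not determine a functor on $\call^\bullet$: by Lemma \ref{lem-factor-in-L}\ref{lem-factor-in-L:fac} every morphism factors as an isomorphism followed by an inclusion, but the isomorphism may join objects that are $\calf$-conjugate without being $S$-conjugate, and such isomorphisms are not composites of inclusions, morphisms $\widehat{g}$ and elements of the groups $\Aut_\call(P)$; their values must be prescribed separately. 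This is exactly the role of the paper's sets $\tMset_{P,Q}$, lifts to $\call$ of representatives of $\morf{P}{Q}/N_S(Q)$, which $\Psi$ is decreed to fix. (3) The coherence step is asserted rather than proved: $\call^\bullet$ has infinitely many objects and morphisms, hence infinitely many composition relations, and these are not congruences inside a single discrete $p$-toral group, so the claim that only finitely many $p$-power conditions need absorbing is unsubstantiated. Reducing everything to finitely many conditions is precisely the hard content of the paper's Steps 1--4: one introduces $\la_\vp(g)$ and $u_{\ga,\vp}$ via $\vp\circ\widehat{g}=\widehat{\la_\vp(g)}\circ\vp'$ and $\ga\circ\vp=\widehat{u_{\ga,\vp}}\circ\varepsilon$, proves $\la_\vp(\psi(g))=\psi(\la_\vp(g))$ once $\psi$ fixes the finite sets $\cset,\iset,\dset$, and only then verifies well-definedness, functoriality, the covering property and multiplicativity of $\zeta\mapsto\Psi_\zeta$ by explicit computation. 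Without such a mechanism your construction yields neither a functor nor the homomorphism $\A_\calg$.
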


\begin{proof}
Let $p^{k_S}$ be the order of the extension class of $S$ in $H^2(\Ga,S_0)$. By Proposition \ref{prop-deg-section}\ref{deg-section:split-epi}, there exists a homomorphism
\[
\A_S \colon \units{k_S} \xto{} \adams(S; 1_\Ga)
\] 
which is a right inverse of the degree map $\deg\colon \adams(S;1_\Ga)\to\units{k_S}$. 
Moreover, there is a section $\si \colon \Ga \to S$ such that $\A_S(\zeta)(t \cdot \si(x))=t^\zeta \cdot \si(x)$ for all $t \in S_0$ and all $x \in \Ga$.

For each $\zeta\in\units{k_S}$ set $\A_S(\zeta)  = \psi_\zeta$. Define a function
\[
\theta \colon S \to S_0, \qquad \theta(x)=x \cdot \si(\bar{x})^{-1},
\]
where $\bar{x}\in \Ga$ is the image of $x\in S$.
Observe that $x=\theta(x) \cdot \si(\bar{x})$ for any $x \in S$ and therefore (see Definition \ref{Def-p-adic-val-gamma})
\begin{equation}\label{main-thm:psi-fixes-x}
\psi_\zeta(x)=x \qquad \text{if $\nu_p(\zeta-1) \geq \log_p(\ord(\theta(x)))$.}
\end{equation}
For any subset $X \subseteq S$, let $\theta(X) \subseteq S_0$  denote the image of $X$ in $S_0$, and for  $Y \subseteq S_0$, denote by $\ord(Y)$ the supremum of the orders of the elements of $Y$.

For every $P \in \objset$ let $\rset_P$ be a set of representatives for the cosets of $P_0$ in $P$.
Given any $P,Q \in \objset$ the set $\morf{P}{Q}/Q$ is finite by \cite[Lemma 2.5]{BLO3}, and hence so is the set $\morf{P}{Q}/N_S(Q)$.
For $P, Q\in\objset$, let $\Mset_{P,Q} \subseteq \F$ be a choice of a set of representatives for $\morf{P}{Q}/N_S(Q)$.
For convenience, if $P \leq Q$, we will always choose $\incl_P^Q$ to represent its image in $\morf{P}{Q}/N_S(Q)$.
In particular $\Id_P$ represents its image in $\Aut_\F(P)/N_S(P)$.

\noindent{\bf Proof of Part \ref{main-thm:F}.}
For any $P,Q \in \objset$ define the following subsets of $S$:
\[
\mset_{P,Q} = \underset{f \in \Mset_{P,Q}}{\cup} f(\rset_P), \qquad \text{ and set } \qquad \mset=\underset{P,Q \in \objset}{\cup} \mset_{P,Q}.
\]
Note that $\rset_P \subseteq \mset$ for any $P \in \objset$,  since $\Id_P \in \Mset_{P,P}$.
Define
\[
k_\F \defeq \max \, \{\,  k_S , \,\log_p \ord(\theta(\mset)) \,\}.
\]
Let $\A_\F \colon \units{k_\F} \to \adams(S;1_\Ga)$ be the restriction of $\A_S$ to $\units{k_\F}$.
We claim that its image is contained in $\adams(\F;1_\Ga)$.
This will complete the proof of \ref{main-thm:F}.

Fix $\zeta \in \units{k_\F}$, and write $\psi=\psi_\zeta$ for short. 
It is clear from Proposition \ref{bullet props}\ref{bullet props:functor} that it is enough to check that $\psi$ is fusion preserving on $\F^\bullet$.
In other words, it suffices to show that $\psi \circ f \circ \psi^{-1} \in \F^\bullet$ for any $P,Q \in \F^\bullet$ and any $f \in \morf{P}{Q}$.

Consider first any $P,Q \in \objset$ and any $f \in \Mset_{P,Q}$.
Since $P_0$ is a discrete $p$-torus, $P_0 \leq S_0$ and also $f(P_0) \subseteq Q_0 \leq S_0$.
Since $\psi|_{S_0}$ is the $\zeta$-power map, it follows that $\psi \circ f \circ \psi^{-1}(x) = f(x)$ for any $x \in P_0$.
For any $x \in \rset_P$, the definition of $k_\F$ implies that $\log_p(\ord(\theta(f(x))) \leq k_\F \leq \nu_p(\zeta-1)$ and therefore Equation \eqref{main-thm:psi-fixes-x} implies that $\psi(f(x))=f(x)$.
Since $P$ is generated by $P_0$ and $\rset_P$, we deduce that $\psi \circ f \circ \psi^{-1} =f$ for any $f \in \Mset_{P,Q}$.
Since $1_P \in \Mset_{P,P}$ by construction, it follows that $\psi(P)=P$ for any $P \in \objset$.

Let $P,Q \in \calh^\bullet(\F)$ be any subgroups, and let $f \in \F^\bullet(P,Q)$ be a morphism.
There  are $P',Q' \in \objset$, $g \in N_S(P,P')$, $h \in N_S(Q',Q)$ and $f' \in \Mset_{P',Q'}$, such that $f = c_h \circ f' \circ c_g$.
Since $\psi \circ f' \circ \psi^{-1} = f'$ one has \[\psi \circ f \circ \psi^{-1} = c_{\psi(h)} \circ f' \circ c_{\psi(g)},\] which is clearly a morphism in $\F$.
It remains to show only that if $P\in\calh^\bullet(\calf)$, then so is $\psi(P)$. Let $P'\in\calp$ be a representative of the $S$ conjugacy class of $P$, and  let $g\in N_S(P',P)$ be any element. Then $\psi(P)=c_{\psi(g)}(P')$, and since  conjugation by an element of $S$ is clearly a fusion preserving automorphism of $S$, the claim follows from Proposition \ref{fusion-preserving-respect-bullet}.
\medskip

\noindent{\bf Proof of Part \ref{main-tm:G}.}
For any morphism $\vp$ in $\L$, denote the corresponding homomorphism in $\F$ by $[\vp]$ as usual.
Let $\L^\bullet$ denote the full subcategory of $\L$ whose object set is $\calh^\bullet(\F^c)$.
For every $P,Q \in \objset^c$ fix a set $\tMset_{P,Q}$ of morphisms in $\morl{P}{Q}$ which are preimages of the homomorphisms in $\Mset_{P,Q}$ under $\pi\colon\call\to\calf$.
In particular take $\iota_P^Q$ for the lift of $\incl_P^Q$ if $P \leq Q$, and note that because of this choice $1_P \in \tMset_{P,P}$ for all $P\in\objset^c$.

Let  $P, Q \in \objset^c$ and let $\vp \in \tMset_{P,Q}$ be any morphism.
Since $\Aut_\L(Q)$ acts freely on $\morl{P}{Q}$ by Corollary \ref{cor-morphisms-of-L-are-mono-and-epi}, and since $\tMset_{P,Q}$ forms a set of representatives for $\morl{P}{Q}/N_S(Q)$, for every $g \in N_S(P)$ there exist unique $\vp' \in \tMset_{P,Q}$ and $\la_{\vp}(g) \in N_S(Q)$ such that
\begin{equation}\label{lambda-phi-def}
\varphi\circ \widehat{g} = \widehat{\lambda_\varphi(g)}\circ\varphi'.
\end{equation}
We thus obtain a function $\la_\vp \colon N_S(P) \to N_S(Q)$, whose useful properties will be exploited later.

Similarly, if $P,Q,R \in \objset^c$ and $\vp \in \tMset_{P,Q}$ and $\ga \in \tMset_{Q,R}$, then there are unique $u_{\gamma,\varphi}\in N_S(R)$, and $\varepsilon\in\tMset_{P,R}$ such that 
\[
\gamma\circ\varphi = \widehat{u_{\gamma,\varphi}}\circ\varepsilon.
\]
In particular, if $Q=R$ and $\gamma=1_Q$, then $\varphi = \widehat{x}$ for some $x\in N_S(P,Q)$ if and only if $\varepsilon = \widehat{y}$ for some $y\in N_S(P,Q)$. 
But $\tMset_{P,Q}$ contains only one morphism in each left $N_S(Q)$ class, and so $x=y$, and $u_{1_Q,\widehat{x}} = 1$. 
Similarly, if $P=Q$, $\varphi = 1_P$ and $x\in N_S(P,R)$, then $u_{\widehat{x},1_P}=1$.

For any $P \in \objset^c$ let $\nset_P$ be a set of representatives for the cosets of $P$ in $N_S(P)$.
We choose $1 \in N_S(P)$ to represent the coset $1 P$.
Consider the following subsets of $S$.
\[
\cset = \bigcup_{P,Q \in \objset^c} \, \bigcup_{\vp \in \tMset_{P,Q}} \la_\vp(\nset_P), \qquad 
\iset = \bigcup_{P,Q \in \objset^c} \, \{ x \in S \;|\; \widehat{x} \in \tMset_{P,Q} \} 
\]\[
\dset = \bigcup_{P,Q,R \in\objset^c} \{ u_{\ga,\vp} \;|\; \vp \in \tMset_{P,Q}, \;\ga \in \tMset_{Q,R} \}. 
\]

Clearly, all these sets are finite. 
Notice also that $\nset_P \subseteq \cset$ for all $P\in \calp^c$, since $1_P \in \tMset_{P,P}$, and since $1_P \circ \widehat{g}=\widehat{g} \circ 1_P$ for all $g \in N_S(P)$, so $\la_{1_P}(g)=g$.
Define
\[
k_\calg = 
\max \{ k_\F, \, \log_p \ord(\theta(\cset)), \, \log_p \ord(\theta(\iset)), \log_p \ord(\theta(\dset)) \, \}.
\]
Fix some $\zeta \in \units{k_\calg}$ and set $\psi=\A_\F(\zeta)$.
Equation \eqref{main-thm:psi-fixes-x} now shows that
\begin{equation}\label{main-thm:psi-fixes-cdin}
\text{$\psi$ fixes every element in the sets } \cset, \iset, \text{ and } \dset.
\end{equation}
Our goal now is to construct $\Psi \colon \L \to \L$ such that $(\psi,\Psi) \in \adams(\calg)$.
We will then show that the resulting assignment $\zeta \mapsto (\psi,\Psi)$ is a group homomorphism $\units{k_\calg} \to \adams(\calg)$. The rest of the proof proceeds in five steps. We first study the properties of the function $\lambda_\vp$ essential to our analysis. In Steps 2  and 3 we define $\Psi$ on $\call^\bullet$ and show that it is a functor. Step 4 is the proof that $\Psi$ covers $\psi$. Finally, in Step 5 we extend the definition of $\Psi$ to $\call$, define the function $\A_\calg$ and show that it is a homomorphism, thus completing the proof of (ii).
\medskip

\noindent
\textbf{Step 1 -- Properties of $\la_\vp$.}
Fix $P, Q \in \objset^c$ and $\vp \in \tMset_{P,Q}$.
Then
\begin{itemize}
\item[(a)]
If $g \in P$ then $\la_\vp(g)=[\vp](g)$.

\item[(b)]
Fix some $g \in N_S(P)$ and  let $\vp' \in \tMset_{P,Q}$ be the unique morphism such that $\vp \circ \widehat{g} = \widehat{\la_\vp(g)} \circ \vp'$.
Then $\psi(g) \in N_S(P)$ and $\vp \circ \widehat{\psi(g)} = \widehat{\psi(\la_\vp(g))} \circ \vp'$.
In particular $\la_\vp(\psi(g)) = \psi(\la_\vp(g))$.
\end{itemize}

Claim (a) follows immediately from axiom (C) of Definition \ref{L-cat} and the definition of $\la_\vp$ in \eqref{lambda-phi-def}.

To prove (b) write $g=hg_0$ for some unique $h\in P$ and $g_0\in\n_P$. 
By Equation \eqref{lambda-phi-def}, $\vp \circ \widehat{g_0} = \widehat{\la_{\vp}(g_0)} \circ \vp'$ for a unique $\vp' \in \tMset_{P,Q}$.
By Axiom (C) in Definition \ref{L-cat}
\[
\varphi\circ\widehat{g} = \varphi\circ\widehat{hg_0} = \varphi\circ\widehat{h}\circ\widehat{g_0} = \widehat{[\varphi](h)}\circ\varphi\circ\widehat{g_0} = \widehat{[\varphi](h)}\circ\widehat{\lambda_\varphi(g_0)}\circ\varphi'.
\]
In particular $\la_\vp(g)=[\vp](h) \cdot \la_\vp(g_0)$ by uniqueness.
In Part \ref{main-thm:F} we have shown that $\psi(P)=P$ for all $P \in \objset$.
Thus in particular $\psi(g) \in N_S(P)$, and $\psi(h) \in P$.
Note that $\psi$ fixes $g_0$ and $\la_\vp(g_0)$ by \eqref{main-thm:psi-fixes-cdin}, since $\curs{n}_P\subseteq\curs{c}$.
Also $[\vp] \in \Mset_{P,Q}$ and by Part \ref{main-thm:F},  $\psi \circ [\vp] \circ \psi^{-1}=[\vp]$.
By applying Axiom (C) again, one has
\begin{multline*}
\vp \circ \widehat{\psi(g)} = \vp \circ \widehat{\psi(h)} \circ \widehat{g_0} = 
\widehat{[\vp](\psi(h))} \circ \varphi\circ\widehat{g_0}=
\widehat{[\vp](\psi(h))} \circ \widehat{\la_\vp(g_0)} \circ \vp' = \\
\widehat{\psi([\vp](h)}) \circ \widehat{\psi(\la_\vp(g_0))} \circ \vp' =
\widehat{\psi(\la_\vp(g))} \circ \vp'.
\end{multline*}
The last claim of (b) follows from Equation (4). 
\medskip

\noindent
\textbf{Step 2 -- Definition of $\Psi \colon \L^\bullet \to \L^\bullet$.}
On objects define $\Psi(P)=\psi(P)$.
This makes sense by Proposition \ref{fusion-preserving-respect-bullet} which states that $\psi(P)$ belongs to $\calh^\bullet(\F)$ if $P$ does.

Let $P,Q \in \calh^\bullet(\F^c)$ be any subgroups and let $\vp \in \morl{P}{Q}$ be a morphism.
There are unique $P',Q'\in\objset^c$ which are $S$-conjugate to $P$ and $Q$ respectively. Choose $g\in N_S(P',P)$ and $h\in N_S(Q,Q')$. In particular if $P=P'$, choose $g=1$, and similarly if $Q=Q'$. 
Then there are unique $\varphi'\in \tMset_{P',Q'}$ and $x\in N_S(Q')$ such that
\begin{equation}\label{main-thm:eq-factor-morphisms}
\widehat{h} \circ \varphi \circ \widehat{g} = \widehat{x} \circ \varphi'.
\end{equation}
Define $\Psi(\varphi)$ to be the composite
\[
\Psi(P) \xrightarrow{\widehat{\psi(g^{-1})}} P' \xrightarrow{\varphi'}Q' \xrightarrow{\widehat{\psi(x)}} \Psi(Q') \xrightarrow{\widehat{\psi(h^{-1})}} \Psi(Q).
\]
In particular, if $P,Q \in \objset^c$ and $\vp \in \tMset_{P,Q}$, then $\Psi(\vp)=\vp$.

We need to show that $\Psi(\vp)$ does not depend on the choice of $g$ and $h$.
If $u\in N_S(P',P)$ and $v\in N_S(Q,Q')$ are any other elements, then there are unique $\varphi''\in \tMset_{P',Q'}$ and $y\in N_S(Q')$ such that $\widehat{v}\circ\varphi\circ\widehat{u} = \widehat{y}\circ\varphi''$. 
For the two corresponding definitions of $\Psi(\varphi)$ to coincide it suffices to show that the diagram
\[
\xymatrix{
\Psi(P) \ar[rr]^{\widehat{\psi(g^{-1})}} \ar@{=}[d] & & 
P' \ar[rr]^{\varphi'} \ar[d]_{\widehat{\psi(u^{-1}g)}} & &
Q' \ar[rr]^{\widehat{\psi(x)}} \ar[d]_{\widehat{\psi(y^{-1}vh^{-1}x)}}  & &
\Psi(Q')\ar[rr]^{\widehat{\psi(h^{-1})}} \ar[d]_{\widehat{\psi(vh^{-1})}} & &
\Psi(Q) \ar@{=}[d] \\
\Psi(P) \ar[rr]_{\widehat{\psi(u^{-1})}} & &
P' \ar[rr]_{\varphi''} & &
Q' \ar[rr]_{\widehat{\psi(y)}} & &
\Psi(Q') \ar[rr]_{\widehat{\psi(v^{-1})}} & & 
\Psi(Q)
}
\]
commutes. By Remark \ref{remark-hats}, the first, third and fourth squares commute.
Therefore, the only square whose commutativity is not obvious is the second, namely
we must show that 
\begin{equation}\label{welldef}
\varphi''\circ\widehat{\psi(u^{-1}g)} = \widehat{\psi(y^{-1}vh^{-1}x)}\circ\varphi'.
\end{equation}

Write $s=u^{-1}g$ and $t = y^{-1}vh^{-1}x$ for short. Thus we must show that $\varphi''\circ\widehat{\psi(s)} = \widehat{\psi(t)}\circ\varphi'$. 
By the choices made, and the definition of $\lambda_{\vp''}$ we have
\begin{equation}\label{main-thm:eq-step1a}
\widehat{t}\circ\varphi'=\varphi''\circ\widehat{s} = \widehat{\lambda_{\varphi''}(s)}\circ\varphi'''.
\end{equation}
Hence $\varphi'=\varphi'''$, and $t= \lambda_{\varphi''}(s)$.
Note that $\psi(s) \in N_S(P')$ because $P' \in \objset$ so  $\psi(P')=P'$.
Similarly $\psi(t) \in N_S(Q')$.
By Step 1 (b), and since $t=\la_{\vp''}(s)$, 
\begin{equation}\label{main-thm:eq-step1b}
\varphi''\circ\widehat{\psi(s)} = \widehat{\lambda_{\varphi''}(\psi(s))}\circ\varphi_0'' = \widehat{\psi(\lambda_{\varphi''}(s))}\circ\varphi_0'' = \widehat{\psi(t)}\circ\varphi_0'',
\end{equation}
for a unique $\varphi_0'' \in \tMset_{P',Q'}$. 
Reducing this equation to $\calf$ and recalling that $\psi_*$ fixes $\Mset_{P',Q'}$, we have 
\[\psi\circ [\varphi'']\circ c_s\circ\psi^{-1}=[\varphi'']\circ c_{\psi(s)} = c_{\psi(t)}\circ[\varphi_0''] = \psi\circ c_t\circ[\varphi_0'']\circ\psi^{-1}.
\]
By applying $\psi_*$ to the reduction of Equation  \eqref{main-thm:eq-step1a} to $\F$, the same equation holds with $[\varphi_0'']$ replaced by $[\varphi']$.
It follows that there exists a unique $z\in Z(P')$, such that $\varphi_0'' = \varphi'\circ\widehat{z}$. 
But, $\varphi'\circ\widehat{z} = \widehat{[\varphi'](z)}\circ\varphi'$ by Axiom (C), and so $\varphi_0'' =  \widehat{[\varphi'](z)}\circ\varphi'$. 
Since both $\varphi_0''$ and $\varphi'$ are in $\tMset_{P',Q'}$ are unique representatives of their class modulo the left action of $N_S(Q')$, we must have $\varphi_0''=\varphi'$ and $z=1$. 
Returning to Equation \eqref{main-thm:eq-step1b} we deduce that $\vp'' \circ \widehat{\psi(s)}=\widehat{\psi(t)} \circ \vp'$ which shows, in turn, that $\Psi$ is well defined on $\call^\bullet$. 

\medskip

\noindent
\textbf{Step 3 -- $\Psi \colon \L^\bullet \to \L^\bullet$ is a functor.}
First notice that if $P\in\call^\bullet$ is any object, and $P'\in\objset^c$ represents its $S$ conjugacy class, then for any $g\in N_S(P',P)$, the composite
\[
\psi(P)\xrightarrow{\widehat{\psi(g)^{-1}}} P' \xrightarrow{1_{P'}} P' \xrightarrow {\widehat{\psi(g)^{}}} \psi(P)
\]
is the identity on $\psi(P)$. 
Hence $\Psi(1_P) = 1_{\Psi(P)}$.

It remains to prove that $\Psi$ repsects compositions.
Fix $P,Q, R\in \calh^\bullet(\F^c)$ and let $P',Q',R' \in \objset^c$ be the unique representatives of their $S$-conjugacy class.
Fix elements $x \in N_S(P',P)$, $y \in N_S(Q',Q)$ and $z \in N_S(R',R)$.
Fix morphisms $\al \in \morl{P}{Q}$ and $\be \in \L(Q,R)$.
There are unique morphisms $\al' \in \tMset_{P',Q'}$ and $\be' \in \tMset_{Q',R'}$ and elements $u \in N_S(Q')$ and $v \in N_S(R')$ such that
\[
\widehat{y^{-1}} \circ \al \circ \widehat{x} = \widehat{u} \circ \al' \qquad \text{and} \qquad
\widehat{z^{-1}} \circ \be \circ \widehat{y} = \widehat{v} \circ \be'.
\]
There is a unique morphism $\be'' \in \tMset_{Q',R'}$ such that $\be' \circ \widehat{u} = \widehat{\la_{\be'}(u)} \circ \be''$.
Also there is a unique morphism $\ga' \in \tMset_{P',R'}$ and a unique element $u_{\be'',\al'} \in N_S(R')$ such that $\be'' \circ \al' = \widehat{u_{\be'',\al'}} \circ \ga'$.
Now,
\[
\widehat{z^{-1}} \circ \be \circ \al \circ \widehat{x} =
\widehat{v} \circ \be' \circ \widehat{u} \circ \al' =
\widehat{v} \circ \widehat{\la_{\be'}(u)} \circ \be'' \circ \al' =
\widehat{v} \circ \widehat{\la_{\be'}(u)} \circ \widehat{u_{\be'',\al'}} \circ \ga'.
\]
Note that $\psi$ fixes $u_{\be'',\al'}$ by \eqref{main-thm:psi-fixes-cdin}.
By definition of $\Psi$ on morphisms in $\L^\bullet$ and Step 1 (b),
\begin{multline*}
\Psi(\al \circ \be) = 
\widehat{\psi(z)} \circ \widehat{\psi(v)}\circ\widehat{\psi(\la_{\be'}(u))}\circ \widehat{\psi(u_{\be'',\al'})} \circ \ga' \circ \widehat{\psi(x)}^{-1} =
\\
\widehat{\psi(z)} \circ \widehat{\psi(v)} \circ \widehat{\cdot \la_{\be'}(\psi(u))} \circ\widehat{u_{\be'',\al'}} \circ \ga' \circ \widehat{\psi(x)}^{-1} =
\widehat{\psi(z)} \circ \widehat{\psi(v)} \circ \widehat{\cdot \la_{\be'}(\psi(u))} \circ \be'' \circ \al' \circ \widehat{\psi(x)}^{-1} =
\\
\widehat{\psi(z)} \circ \widehat{\psi(v)} \circ \be' \circ \widehat{\psi(u)} \circ \al' \circ \widehat{\psi(x)}^{-1} =
\widehat{\psi(z)} \circ \widehat{\psi(v)} \circ \be' \circ \widehat{\psi(y)^{-1}} \circ \widehat{\psi(y)} \circ \widehat{\psi(u)} \circ \al' \circ \widehat{\psi(x)}^{-1} = 
\\
\Psi(\be) \circ \Psi(\al).
\end{multline*}

\medskip

\noindent
\textbf{Step 4 -- $\Psi$ covers $\psi$.} We show next that $\Psi \colon \L^\bullet \to \L^\bullet$ covers $\psi \in\adams_\calf(S)$ (see Definition \ref{def-functor-covers-F}).

Fix $P,Q \in \F^{\bullet c}$. We show first that $\Psi(\widehat{x})=\widehat{\psi(x)}$ for any $x \in N_S(P,Q)$.
Let $P',Q'\in \objset^c$ represent the $S$ conjugacy classes of $P$ and $Q$ respectively, and let $g\in N_S(P', P)$ and $h\in N_S(Q, Q')$ be any elements. 
Then there exist unique $u\in N_S(Q')$ and $\varphi\in\tMset_{P',Q'}$, such that 
\[
\widehat{h}\circ\widehat{x}\circ\widehat{g} = \widehat{u}\circ\varphi.
\]
By Remark \ref{remark-hats}, $\varphi = \widehat{y}$ where $y=u^{-1}hxg \in N_S(P',Q')$.
Thus, $y \in \iset$ and by \eqref{main-thm:psi-fixes-cdin}, $\psi(y)=y$.
Hence, by the definition of $\Psi$,
\[
\Psi(\widehat{x}) = \widehat{\psi(h^{-1}u})\circ\widehat{y}\circ\widehat{\psi(g^{-1})}=\widehat{\psi(h^{-1}uyg^{-1})}= \widehat{\psi(x)}.
\]
It remains to prove that $\pi\circ\Psi = \psi_*\circ\pi$, where $\pi\colon\call^\bullet\to\calf^{\bullet c}$ is the projection.
First, by definition $\Psi(R)=\psi(R)$ for any $R \in \F^{\bullet c}$.
Next, for any morphism $\vp \in \Mor_{\call^\bullet}(P,Q)$,
we have to prove that $\psi_*([\vp])=[\Psi(\vp)]$.
Fix $g,h,x$ and $\vp'$ such that  $\widehat{h}\circ\vp\circ \widehat{g} = \widehat{x}\circ \vp'$, as in \eqref{main-thm:eq-factor-morphisms}, so that by definition $\Psi(\vp)=\widehat{\psi(h^{-1}x)} \circ \vp' \circ \widehat{\psi(g^{-1})}$.
In particular $[\vp]=c_{h^{-1}x} \circ [\vp'] \circ c_{g^{-1}}$.
Note also that $[\vp'] \in \Mset_{P',Q'}$,  as it is the reduction to $\calf$ of $\varphi'\in \tMset_{P',Q'}$, and that by the proof of Part \ref{main-thm:F},  $\psi_*([\vp'])=[\vp']$.
Therefore 
\begin{multline*}
\psi_*([\vp])=\psi \circ (c_{h^{-1}x}\circ [\vp'] \circ c_{g^{-1}}) \circ \psi^{-1} = 
c_{\psi(h^{-1}x)} \circ [\vp'] \circ c_{\psi(g^{-1})} =\\
[\widehat{\psi(h^{-1}u)} \circ \vp' \circ \widehat{\psi(g^{-1})}] = [\Psi(\vp)].
\end{multline*}

\medskip

\noindent
\textbf{Step 5 -- Extension to $\call$ and the homomorphism $\A_\calg$.}
Proposition \ref{extend-fusion-preserving-from-lbullet-to-l} and Step 3 above imply that the functor $\Psi \colon \L^\bullet \to \L^\bullet$ constructed above extends uniquely to a functor $\Psi \colon \L \to \L$ which covers $\psi$.
This defines a function $\A_\calg \colon \units{k_\calg} \to \adams(\calg;1_\Ga)$ whose composition with $\adams(\calg) \to \adams(\calf)$ is clearly the restriction of $\A_\F$ to $\units{k_\calg}$.

It remains to check that $\A_\calg$ is a homomorphism. First, if $\zeta=1$ then $\psi=\A_\F(\zeta)=1_S$, and it is easy to check that the definitions in Step 2 implies that the functor $\Psi$ is the identity on $\L^\bullet$ and therefore extends to the identity on $\L$, by Proposition \ref{extend-fusion-preserving-from-lbullet-to-l}.

Next fix $\zeta_1,\zeta_2 \in \units{k_\calg}$ and set 
\[(\psi_{\zeta_i},\Psi_{\zeta_i}) \defeq \A_\calg(\zeta_i)\; i = 1,2,\quad\text{and}\quad
(\psi_{\zeta_1\zeta_2},\Psi_{\zeta_1\zeta_2})\defeq \A_\calg(\zeta_1\zeta_2).\]
By Part \ref{main-thm:F}, $\psi_{\zeta_1} \circ \psi_{\zeta_2}=\psi_{\zeta_1 \zeta_2}$, and it remains to prove that $\Psi_{\zeta_1} \circ \Psi_{\zeta_2}=\Psi_{\zeta_1 \zeta_2}$.
Alternaively, we need to prove that $\Phi \defeq \Psi_{\zeta_1} \circ  \Psi_{\zeta_2} \circ \Psi_{\zeta_1\zeta_2}^{-1}$ is the identity functor on $\L$.
On objects $P \in \L$ it is immediate that $\Phi(P)=P$, because $\Psi_\zeta(P)=\psi_\zeta(P)$ for any $\zeta$.
Also, it follows from Step 4 applied to $\Psi_{\zeta_1},\Psi_{\zeta_2}$ and $\Psi_{\zeta_1\zeta_2}$ that $\Phi(\widehat{g})=\widehat{g}$ for all $P,Q \in \L^\bullet$ and all $g \in N_S(P,Q)$.
By Proposition \ref{extend-fusion-preserving-from-lbullet-to-l} it suffices to prove that $\Phi$ is the identity on $\L^\bullet$.
By the construction for any $\zeta$ for which it is defined $\Psi_\zeta$ fixes all morphisms in $\tMset_{P,Q}$ for any $P,Q \in \objset^c$, and therefore so does $\Phi$.
Since every morphism $\vp \in \Mor_{\L^\bullet}(P,Q)$ has the form $\widehat{h} \circ \vp' \circ \widehat{g}$ for some $\vp' \in \tMset_{P',Q'}$ and some $g \in N_S(P,P')$ and $h \in N_S(Q',Q)$, it follows at once that $\Phi(\vp)=\Phi(\widehat{h}) \circ \Phi(\vp') \circ \Phi(\widehat{g})=\vp$.  This completes the proof that $\A_\calg$ is a homomorphism and with that the proof of the theorem.
\end{proof}

We end with a few corollaries  of the proof of Theorem \ref{main-thm}.

\begin{Cor}
Fix a $p$-local compact group $\calg=\SFL$, and let $\zeta\in\units{k_\calg}$ be any element. Let  $(\psi,\Psi) = \A_\calg(\zeta)$. Then there is a set $\objset$ of representative of all $S$-conjugacy class in $\calh^\bullet(\F)$, and 
\begin{itemize}
\item for each $P,Q \in \objset$ there is a set $\Mset_{P,Q}$ of morphism in $\morf{P}{Q}$, which is a complete set of representatives of $\Hom_\F(P,Q)/N_S(Q)$, and 
\item for all $\F$-centric subgroups $P,Q$  in $\objset$, there are sets $\tMset_{P,Q}$ of morphisms in $\Mor_{\L}(P,Q)$ which are representatives for the set $\morl{P}{Q}/N_S(Q)$,
\end{itemize}
such that the following hold.
\begin{enumerate}
\item $\psi(P) = P$, for all $P\in \objset$.
\item $\psi_*(\varphi) = \psi^{-1}\circ\varphi\circ\psi =\varphi$, for each $\varphi\in\Mset_{P,Q}$.
\item $\Psi(\vp)=\vp$ for any $\vp \in \tMset_{P,Q}$.
\end{enumerate}
Moreover, for any $\F$-centric $P,Q \leq S$ and any $g \in N_S(P,Q)$, $\Psi(\widehat{g})=\widehat{\psi(g)}$. In particular $\Psi$ is isotypical. 
\end{Cor}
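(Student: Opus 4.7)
The plan is to take the sets $\objset$, $\Mset_{P,Q}$ and $\tMset_{P,Q}$ to be exactly those fixed at the start of the proof of Theorem \ref{main-thm}, and to observe that all four assertions of the corollary are by-products of the construction carried out there rather than new claims. I would unpack the relevant steps of the proof in the order below.

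For properties (i) and (ii), which concern $\psi$ alone, I would appeal to Part \ref{main-thm:F} of the proof. The integer $k_\calf$ was chosen precisely so that $\psi$ fixes every element of $\mset = \bigcup_{P,Q \in \objset,\,f \in \Mset_{P,Q}} f(\rset_P)$, by Equation \eqref{main-thm:psi-fixes-x}. Since $\rset_P \subseteq \mset$ and $P$ is generated by $\rset_P$ together with $P_0 \le S_0$, which $\psi$ sends to itself, (i) follows. For (ii), given $f \in \Mset_{P,Q}$ the same choice of $k_\calf$ forces $\psi$ to fix $f(x)$ for each $x \in \rset_P$, while on $P_0$ the two sides of $\psi \circ f \circ \psi^{-1} = f$ coincide because $\psi|_{S_0}$ is the $\zeta$-power map and is therefore central in $\Aut(S_0)$; hence $\psi_*(f) = f$. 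All of this is essentially carried out in Part \ref{main-thm:F} of the proof of Theorem \ref{main-thm}.

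For property (iii), if $P,Q \in \objset^c$ and $\varphi \in \tMset_{P,Q}$, then in the defining formula of $\Psi$ given in Step 2 one may take $P' = P$, $Q' = Q$ and $g = h = 1$, reducing the formula to $\Psi(\varphi) = \widehat{\psi(1)} \circ \varphi \circ \widehat{\psi(1)} = \varphi$ (a fact already noted parenthetically in Step 2). The ``moreover'' identity $\Psi(\widehat{g}) = \widehat{\psi(g)}$ for $P, Q \in \calh^\bullet(\calf^c)$ and $g \in N_S(P,Q)$ is the content of Step 4. For arbitrary $\calf$-centric $P, Q$ the identity follows because the extension of $\Psi$ from $\call^\bullet$ to $\call$ furnished by Proposition \ref{extend-fusion-preserving-from-lbullet-to-l} covers $\psi$, and condition (ii) of Definition \ref{def-functor-covers-F} is precisely the required equation.

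Finally, isotypicality is an immediate consequence of the moreover statement. Under the identification of $\delta_P(P) \le \Aut_\call(P)$ with $\{\widehat{g} \mid g \in P\}$ from Remark \ref{remark-hats}, the restriction $\Psi_{P,P} \colon \Aut_\call(P) \to \Aut_\call(\psi(P))$ carries $\delta_P(P)$ bijectively onto $\{\widehat{\psi(g)} \mid g \in P\} = \delta_{\psi(P)}(\psi(P))$, which is the definition of isotypical recalled in the proof of Proposition \ref{geo-vs-alg-uAops}. I do not anticipate any real obstacle: the corollary is essentially an unpacking of the structure of the construction of $\A_\calg$, and the only care needed is to match the notation in the corollary's hypotheses with the combinatorial data fixed in the proof of Theorem \ref{main-thm}.
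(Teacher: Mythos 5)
Your proposal is correct and matches the paper's intent exactly: the corollary is stated as a by-product of the construction in the proof of Theorem \ref{main-thm}, and you extract (i)--(ii) from Part (i) of that proof, (iii) from Step 2, and the \emph{moreover} statement from Step 4 together with the extension to $\call$ via Proposition \ref{extend-fusion-preserving-from-lbullet-to-l}, just as intended. The only cosmetic quibble is your appeal to centrality of the power map in $\Aut(S_0)$ for the $P_0$-part of (ii); the relevant (and simpler) fact is just that power maps commute with any homomorphism $P_0 \to Q_0$ of divisible abelian $p$-torsion groups, which is what the paper uses.
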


\begin{Cor}
Let  $p^m$ be the order of the extension class of $S$ in $H^2(\Ga,S_0)$, and let $k_\calf$ and $k_\calg$ be as in Theorem \ref{main-thm}.
Then $k_\calg \geq k_\F \geq m$.
\end{Cor}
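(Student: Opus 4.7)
The proposal is to simply unwind the definitions of the integers $k_S$, $k_\F$ and $k_\calg$ that appear in the proof of Theorem \ref{main-thm}. Recall from the very beginning of that proof that $k_S$ is defined by the condition that $p^{k_S}$ equal the order of the extension class $[S] \in H^2(\Ga,S_0)$, so in the notation of the present corollary $k_S = m$. Therefore the inequality $k_\F \geq m$ is equivalent to $k_\F \geq k_S$.

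Next I would quote the explicit formula
\[
k_\F \defeq \max\{\, k_S,\; \log_p \ord(\theta(\mset)) \,\}
\]
appearing in the proof of Theorem \ref{main-thm}\ref{main-thm:F}. Since the maximum of a finite collection of nonnegative quantities is at least any one of them, one immediately reads off $k_\F \geq k_S = m$, which gives the second inequality of the corollary.

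For the first inequality, I would invoke the analogous formula
\[
k_\calg \defeq \max\{\, k_\F,\; \log_p \ord(\theta(\cset)),\; \log_p \ord(\theta(\iset)),\; \log_p \ord(\theta(\dset)) \,\}
\]
from the proof of Theorem \ref{main-thm}\ref{main-tm:G}. Again by the definition of the maximum, $k_\calg \geq k_\F$, completing the chain $k_\calg \geq k_\F \geq m$.

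There is no real obstacle here: the corollary is a direct bookkeeping consequence of the way the thresholds $k_\F$ and $k_\calg$ were defined in the proof of the main theorem, and no additional input about $\calf$ or $\call$ is needed beyond the fact that the maxima were taken over finite sets of nonnegative integers including $k_S$ and $k_\F$ respectively.
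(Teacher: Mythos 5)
Your proposal is correct and is exactly the argument the paper intends: the corollary is a direct consequence of the definitions $p^{k_S}=$ order of $[S]$, $k_\F=\max\{k_S,\log_p\ord(\theta(\mset))\}$ and $k_\calg=\max\{k_\F,\log_p\ord(\theta(\cset)),\log_p\ord(\theta(\iset)),\log_p\ord(\theta(\dset))\}$ made in the proof of Theorem \ref{main-thm}. Nothing further is needed.
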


\end{document}